\selectfont\symbol{60}\fontencoding{\encodingdefault}}
\selectfont\symbol{62}\fontencoding{\encodingdefault}}
\selectfont\symbol{124}\fontencoding{\encodingdefault}}
\newcommand{\assign}{:=}
\newcommand{\dueto}[1]{\textup{\textbf{(#1) }}}
\newcommand{\mathd}{\mathrm{d}}
\newcommand{\nocomma}{}
\newcommand{\tmem}[1]{{\em #1\/}}
\newcommand{\tmmathbf}[1]{\ensuremath{\boldsymbol{#1}}}
\newcommand{\tmname}[1]{\textsc{#1}}
\newcommand{\tmop}[1]{\ensuremath{\operatorname{#1}}}
\newcommand{\tmtextit}[1]{{\itshape{#1}}}
\newenvironment{enumeratenumeric}{\begin{enumerate}[1.] }{\end{enumerate}}
\newtheorem{lemma}{Lemma}
\newtheorem{theorem}{Theorem}
\newtheorem*{theorem*}{Theorem}
\renewcommand{\paragraph}[1]{ {\mbox{}\\ \noindent\textbf{#1.}} }
\begin{document}

\title[Second order Riesz transforms and processes with jumps]{Second order Riesz transforms on multiply--connected Lie groups and
processes with jumps}

\author{N.\;Arcozzi}

\author{K.\;Domelevo}

\author{S.\;Petermichl}

\date{\today}

{\maketitle}

\begin{abstract}
  We study a class of combinations of second order Riesz transforms on Lie
  groups $\mathbbm{G}=\mathbbm{G}_{x} \times \mathbbm{G}_{y}$ that are
  multiply connected, composed of a discrete abelian component
  $\mathbbm{G}_{x}$ and a compact connected component $\mathbbm{G}_{y}$. We
  prove sharp $L^{p}$ estimates for these operators, therefore generalizing
  previous results
  {\cite{VolNaz2004a}}{\cite{DomPet2014c}}{\cite{BanBau2013}}.
  
  We construct stochastic integrals with jump components adapted to functions
  defined on the semi-discrete set $\mathbbm{G}_{x} \times \mathbbm{G}_{y}$.
  We show that these second order Riesz transforms applied to a function may
  be written as conditional expectation of a simple transformation of a
  stochastic integral associated with the function. The analysis shows that
  It{\^o} integrals for the discrete component must be written in an
  {\tmem{augmented}} discrete tangent plane of dimension twice larger than
  expected, and in a suitably chosen discrete coordinate system. Those
  artifacts are related to the difficulties that arise due to the discrete
  component, where derivatives of functions are no longer local. Previous
  representations of Riesz transforms through stochastic integrals in this
  direction do not consider discrete components and jump processes. 
\end{abstract}

\section{Introduction}

Sharp $L^{p}$ inequalities for pairs of differentially subordinate martingales
date back to the celebrated work of {\tmname{Burkholder}} {\cite{Bur1984a}} in
1984 where the optimal constant is exhibited. See also from the same author
{\cite{Bur1988a}}{\cite{Bur1991a}}. The relation between differentially
subordinate martingales and Cald{\'e}ron-Zygmund operators is known since
{\tmname{Gundy}}--{\tmname{Varopoulos}} {\cite{GunVar1979}}.
{\tmname{Banuelos}}--{\tmname{Wang}} {\cite{BanWan1995}} were the first to
exploit this connection to prove new sharp inequalities for singular
intergrals. A vast literature has since then been accumulating on this line of
research, some of which will be discussed below.

\medskip

In this article we bring for the first time this whole circle of ideas to a
semi-discrete setting, applying it to a family of second order Riesz
transforms on multiply--connected Lie groups.  We prove optimal norm 
estimates in $L^p$ for these operators as well as derive their representation 
through stochastic integrals using jump processes on multiply--connected Lie groups.

\medskip

Before we state our results in a
complete form, we present a sample case which requires little notation.

Consider a Lie group $\mathbbm{G} \assign \mathbbm{G}_{x} \times
\mathbbm{G}_{y} ,$ where $\mathbbm{G}_{x}$ is a discrete abelian group with a
fixed finite set $G= \{ g_{i} , \nocomma g_{i}^{-1} : i=1, \ldots ,m \}$ of
generators, and their reciprocals, and $\mathbbm{G}_{y}$ is a connected,
compact Lie group of dimension $n$ endowed with a biinvariant metric. Given a
smooth function $f:\mathbbm{G} \rightarrow \mathbbm{C}$ consider a fixed
orthonormal basis $Y_{1} , \ldots ,Y_{n}$ of the Lie algebra
$\mathfrak{G}_{y}$ of $\mathbbm{G}_{y}$. The gradient of a function and the
divergence of a vector field with respect to the variable $y$, and the Laplace
operator $\Delta_{y}$, can all be expressed in terms of the fixed basis.
Discrete partial derivatives are defined in analogy. Given any point $( x,y )
\in \mathbbm{G}_{x} \times \mathbbm{G}_{y}$, and given a direction $i \in \{
1, \ldots ,m \}$, the right and the left derivative at $( x,y )$ in the
direction $i$ are:
\[ \partial_{i}^{+} f ( x,y ) \assign f ( x+ g_{i} ,y \nocomma ) -f ( x ,y )
   =:X_{i}^{+} f ( x,y ) , \]
\[ \partial^{-}_{i} f ( x,y ) \assign f ( x ,y ) -f ( x-g_{i}  ,y )
   =:X_{i}^{-} f ( x,y ) . \]
The discrete Laplace operator $\Delta_{x}$ is then defined as $$\Delta_{x} f (
x,y ) = \sum_{i=1}^{m} X_{i}^{-} X_{i}^{+} f ( x,y ).$$ Let $z= ( x,y )$ and
$\Delta_{z} = \Delta_{x} + \Delta_{y}$. For $1<p< \infty$, let $p^{\ast} =
\max \left\{ p, \frac{p}{p-1} \right\}$. The $L^{p}$-norm of a function is
computed w.r.t. to the Haar measure on $\mathbbm{G}$.

The inequalities in the Theorem below can be rephrased in terms of the Riesz
transform estimates proved in Theorem \ref{T: p minus 1 estimate}.

\begin{theorem*}
If $f:\mathbbm{G} \rightarrow \mathbbm{C}$ is smooth and $1<p<
\infty$, then
\[ \| Y_{j} Y_{l} f \|_{L^{p}} \leqslant ( p^{\ast} -1 ) \| ( - \Delta_{z} ) f
   \|_{L^{p}} \]
and
\[ \| X^{+}_{i} X^{-}_{i} f \|_{L^{p}} \leqslant ( p^{\ast} -1 ) \| ( -
   \Delta_{z} ) f \|_{L^{p}} \]
for $1 \leqslant j,l \leqslant n$ and $1 \leqslant i \leqslant m$.
\end{theorem*}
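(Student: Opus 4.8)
The plan is to recast both inequalities as operator-norm bounds and then to establish these through the stochastic-integral representation announced in the abstract, closed off by Burkholder's sharp inequality for differentially subordinate martingales. Putting $g=(-\Delta_z)f$ and noting that $Y_jY_l$ and $X_i^{+}X_i^{-}$ annihilate the constants that span the kernel of $\Delta_z$, it suffices to prove that the two second order Riesz transforms $R_{jl}:=Y_jY_l(-\Delta_z)^{-1}$ and $R_i:=X_i^{+}X_i^{-}(-\Delta_z)^{-1}$ have $L^{p}$ operator norm at most $p^{\ast}-1$ on mean-zero functions; general smooth $f$ then follows by density. Since $-\Delta_z=\sum_{k}(-Y_k^{2})+\sum_{i}(-X_i^{+}X_i^{-})$ is a sum of nonnegative one-directional operators, the symbol of $R_i$ lies in $[-1,0]$ and that of $R_{jl}$ in $[-1,1]$ (again in $[-1,0]$ when $j=l$), and it is this range that the martingale argument will convert into the constant $p^{\ast}-1$.

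The representation rests on the semigroup $e^{t\Delta_z}$ and the background process adapted to the semi-discrete structure: Brownian motion on the compact factor $\mathbbm{G}_y$ run together with a compensated pure-jump process on the discrete factor $\mathbbm{G}_x$ whose jumps are the generators $\pm g_i$, coupled to an auxiliary time coordinate. Started from the Haar measure and run so that its terminal value recovers $g$ while its initial value tends to the mean $0$, this process turns $M_s:=u(Z_s,t_0-s)$, with $u(z,t)=(e^{t\Delta_z}g)(z)$ and $Z_s$ the spatial part, into a martingale; Itô's formula on the continuous factor and the L\'evy-type jump decomposition on the discrete factor express its increments through the continuous gradient $(Y_1u,\dots,Y_nu)$ and the discrete derivatives $X_i^{\pm}u$. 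The operators $R_{jl}$ and $R_i$ are then realised as conditional expectations of a martingale transform $N$ of $M$, obtained by applying a fixed matrix $\mathcal{A}$ to the augmented gradient vector that drives the increments of $M$.

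The heart of the matter is that, once the representation is correctly normalised, the matrix $\mathcal{A}$ is a contraction, reflecting that the symbol of the operator lies in $[-1,1]$. For the continuous transform $R_{jl}$ the relevant datum is the symmetric matrix $\tfrac12(e_je_l^{\top}+e_le_j^{\top})$, equal to $e_je_j^{\top}$ when $j=l$, whose spectral norm is at most $1$; hence $\mathcal{A}$ does not increase the length of the driving vector and $N$ is differentially subordinate to $M$ with constant $1$. It is this realisation of the second order operator as a single transform of one scalar martingale, rather than as the composition of two first order Riesz transforms, that produces the constant $p^{\ast}-1$ in place of $(p^{\ast}-1)^2$. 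Burkholder's theorem then gives $\|N_{t_0}\|_{L^p}\le(p^{\ast}-1)\|M_{t_0}\|_{L^p}$, and since conditional expectation is an $L^p$ contraction while $\|M_{t_0}\|_{L^p}\to\|g\|_{L^p}$, letting $t_0\to\infty$ closes the estimate.

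The main obstacle is the discrete transform $R_i$. Because the discrete derivatives are non-local, $X_i^{+}u$ and $X_i^{-}u$ are genuinely independent data and the jumps occur in both directions $\pm g_i$, so collapsing them onto a single one-dimensional tangent direction does not yield a contraction. The remedy, which is the technical core of the argument, is to write the discrete Itô integrals in an augmented discrete tangent plane of dimension $2m$ and in a carefully chosen orthonormal coordinate system, in which the matrix realising $X_i^{+}X_i^{-}(-\Delta_z)^{-1}$ becomes a genuine contraction and the jump subordination $|\Delta N|\le|\Delta M|$ holds pointwise. Verifying this contraction in the presence of jumps, while keeping the exact constant $1$ on which the sharpness of Burkholder's inequality depends, is where the real work lies; the remaining ingredients, namely the correct normalisation of the representation, the mean-zero reduction, the limit $t_0\to\infty$, and the density of smooth functions, are then routine.
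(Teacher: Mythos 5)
Your route coincides with the paper's: heat extension $P_t=e^{t\Delta_z}$, a background process pairing Brownian motion on $\mathbbm{G}_y$ with a compound Poisson jump process on $\mathbbm{G}_x$, It{\^o} integrals written in the augmented $2m$--dimensional discrete tangent plane in the rotated coordinates $X_i^{2}=X_i^{+}-X_i^{-}$, $X_i^{0}=X_i^{+}+X_i^{-}$, a martingale transform by a fixed matrix of spectral norm at most $1$, differential subordination, the sharp $(p^{\ast}-1)$ martingale inequality, and projection by conditional expectation (this is Theorem \ref{T: a la Gundy-Varopoulos} combined with Theorem \ref{T: p minus 1 estimate}). However, one concrete step fails as written: for the continuous directions with $j\neq l$ you replace the transform matrix by the symmetrization $\tfrac12(e_je_l^{\top}+e_le_j^{\top})$. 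The weak formulation (Lemma \ref{L: representation formula}) shows that the projected operator attached to a constant matrix $(\alpha^{y}_{jk})$ in the $y$--block is $\sum_{j,k}\alpha^{y}_{jk}\,R_jR_k$, so your symmetrized matrix represents $\tfrac12(Y_jY_l+Y_lY_j)(-\Delta_z)^{-1}$. Since the theorem allows an arbitrary connected compact Lie group $\mathbbm{G}_y$ (e.g.\ $SU(2)$, where $[Y_j,Y_l]\neq 0$ for an orthonormal basis), this differs from the target $Y_jY_l(-\Delta_z)^{-1}$ by the nonzero term $\tfrac12[Y_j,Y_l](-\Delta_z)^{-1}$: you prove the sharp bound for the wrong operator. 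The repair is immediate and is exactly the paper's choice: differential subordination requires only $\Vert\tmmathbf{A}_{\alpha}\Vert_2\leqslant 1$, not symmetry, so take the rank-one matrix $e_je_l^{\top}$ (spectral norm $1$) as the $y$--block of $\tmmathbf{A}_{\alpha}$. (Your symmetrization is harmless, indeed stronger, only when $\mathbbm{G}_y$ is abelian or $j=l$.)

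Two smaller points. First, with jumps present Burkholder's theorem is not the right citation; the sharp $(p^{\ast}-1)$ bound under differential subordination by quadratic variation for c{\`a}dl{\`a}g Hilbert--valued martingales is Wang's theorem {\cite{Wan1995a}}, which is what the paper invokes (Theorem \ref{T: Wang's result}). Second, the ``real work'' you defer on the discrete side is in fact a short explicit computation in the paper: because the $x$--block of $\tmmathbf{A}_{\alpha}$ multiplies both rotated components $X_i^{2}f$ and $X_i^{0}f$ by the same scalar $\alpha_i^{x}$, the covariation identities $\mathd[\mathcal{N}^i-\lambda t,\mathcal{X}^i]_t=\tau_{\mathcal{N}_t}\,\mathd\mathcal{N}^i_t$, $\mathd[\mathcal{X}^i,\mathcal{X}^i]_t=\mathd[\mathcal{N}^i-\lambda t,\mathcal{N}^i-\lambda t]_t=\mathd\mathcal{N}^i_t$ collapse the jump part of $\mathd[M^{\alpha,f},M^{\alpha,f}]_t$ to $\vert\alpha_i^{x}\vert^2\{(X_i^{+}f)^2\mathbbm{1}(\tau_{\mathcal{N}_t}=1)+(X_i^{-}f)^2\mathbbm{1}(\tau_{\mathcal{N}_t}=-1)\}\,\mathd\mathcal{N}^i_t$, exactly $\vert\alpha_i^{x}\vert^2$ times the corresponding term for $M^{f}$; this yields inequality (\ref{eq: differential subordination}) with constant exactly $1$ for your choices of $\tmmathbf{A}_{\alpha}$, confirming the contraction you assert. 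With the symmetrization removed and Wang substituted for Burkholder, your proposal matches the paper's proof.
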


The inequalities in the Theorem can be rephrased in terms of Riesz transforms. 
These can be formally defined as $R_i^2:=R_i^+\circ R_i^-:=(X^{+}_{i}\circ (-\Delta_z)^{-1/2})\circ (X^{-}_{i}\circ (-\Delta_z)^{-1/2})=
X^{+}_{i}\circ X^{+}_{i}\circ (-\Delta_z)^{-1})$, $1\le i\le m$, in the discrete directions,
and $R_{jk}=R_j\circ R_k=...=Y_j\circ Y_k\circ (-\Delta_z)^{-1}$, $1\le j,k\le n$ in the continuous directions.

The standard procedure for obtaining inequalities for singular integrals from
inequalities for martingales can be described as follows. Starting with a test
function $f$, martingales are built using Brownian motion and Poisson
extensions in the upper half space $\mathbbm{R}^{+} \times \mathbbm{R}^{n}$.
It is shown that the martingale arising from $R f$, where $R$ is a Riesz
transform in $\mathbbm{R}^{n}$, is a martingale transform of that arising from
$f$. The two form a pair of martingales with differential subordination and
orthogonality. Thus, in the case of Riesz tranforms, the optimal $L^{p}$
constants could be recovered using probabilistic methods. One derives
martingale inequalities under hypotheses of strong differential subordination
and orthogonality relations, see {\tmname{Banuelos}}--{\tmname{Wang}}
{\cite{BanWan1995}}.

In the case of second order Riesz transforms, the use of heat extensions in
the upper half space instead of Poisson extensions originated in
{\tmname{Petermichl}}-{\tmname{Volberg}} {\cite{PetVol2002a}} and was used to
prove \ $L^{p}$ estimates for the second order Riesz transforms based on the
results of Burkholder in {\tmname{Nazarov}}--{\tmname{Volberg}}
{\cite{VolNaz2004a}} as part of their best-at-time estimate for the
Beurling-Ahlfors operator, whose real and imaginary parts themselves are
second order Riesz transforms. 

Here is how this idea takes shape in our case.

\begin{theorem*}
  The second order Riesz transforms $R_i^2f$, $1\le i\le m$, and $R_{jk}f$, $1\le j,k\le n$,
 of a function $f \in L^{2} ( \mathbbm{G} )$ as defined in
  (\ref{eq: definition 2nd order Riesz transforms}) can be written as the
  conditional expectations 
  $$R_i^2f(z)=\mathbbm{E} ( M_{0}^{i,f} | \mathcal{Z}_{0}=z ) \text{ and } R_{jk}f(z)=\mathbbm{E} ( M_{0}^{j,k,f} | \mathcal{Z}_{0}=z ).$$ 
  Here $M_{t}^{i,f}$ and $M_{t}^{j,k,f}$ are suitable martingale transforms of the
  martingale $M^{f}_{t}$ associated to $f$, and $\mathcal{Z}_{t}$ is a
  suitable random walk on $\mathbbm{G}$ (see Section \ref{S: stochastic
  integrals and martingale transforms}).
\end{theorem*}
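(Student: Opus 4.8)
The plan is to realize both families of Riesz transforms through the caloric (heat) extension of $f$ together with a stationary Markov process on $\mathbbm{G}$, following the Gundy--Varopoulos/Nazarov--Volberg scheme but now carrying a jump component in the discrete directions. Let $e^{s \Delta_z}$, $s \geq 0$, denote the heat semigroup generated by $\Delta_z = \Delta_x + \Delta_y$, and set $U_f(s, \cdot) := e^{s\Delta_z} f$. Let $\mathcal{Z}_t = (X_t, Y_t)$, $t \in [-T, 0]$, be the process with generator $\Delta_z$ started at time $-T$ from the Haar measure on $\mathbbm{G}$, where $Y_t$ is Brownian motion on $\mathbbm{G}_y$ generated by $\Delta_y$ and $X_t$ is the compound Poisson walk on $\mathbbm{G}_x$ whose jumps in direction $i$ realize the shifts $x \mapsto x \pm g_i$ associated with $X_i^\pm$. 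A direct computation shows that $U_f(-t, z)$ solves the backward equation $(\partial_t + \Delta_z) U_f(-t, \cdot) = 0$, so that $M^f_t := U_f(-t, \mathcal{Z}_t)$ is a martingale on $[-T, 0]$ with terminal value $M^f_0 = f(\mathcal{Z}_0)$. Applying the It\^o formula for processes with jumps decomposes its differential into a continuous part $\sum_{l=1}^n (Y_l U_f)(-t, \mathcal{Z}_t)\, dB^l_t$ carried by the Brownian directions and a compensated Poisson part encoding the discrete differences $X_i^\pm U_f$.

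First I would define the martingale transforms. For the continuous family, set $M^{j,k,f}_t$ to be the stochastic integral obtained by applying the constant transform $A = e_j e_k^{\mathrm{T}}$ to the continuous part of $dM^f$, that is $M^{j,k,f}_t = \int_{-T}^t (Y_k U_f)(-r, \mathcal{Z}_r)\, dB^j_r$, which vanishes at $t = -T$. For the discrete family, $M^{i,f}_t$ is the analogous transform acting on the compensated jump part; this is where the augmented discrete tangent plane appears, since the non-locality of $X_i^\pm$ forces one to record simultaneously the two shifts $x \pm g_i$ and to transform in the resulting doubled coordinate system so that the selected jump covariation reproduces $X_i^+ X_i^-$. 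In both cases the transform is a contraction, which is what later yields the sharp $(p^\ast - 1)$ bound.

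Next I would reduce the asserted identity to a covariation computation. Testing $\mathbbm{E}(M^{j,k,f}_0 \mid \mathcal{Z}_0 = z)$ against an arbitrary $g \in L^2(\mathbbm{G})$ amounts to evaluating $\mathbbm{E}[M^{j,k,f}_0\, \overline{g(\mathcal{Z}_0)}]$, and since $M^g_t := U_g(-t, \mathcal{Z}_t)$ is a martingale with $M^g_0 = g(\mathcal{Z}_0)$ while $M^{j,k,f}_{-T} = 0$, the martingale product rule gives $\mathbbm{E}[M^{j,k,f}_0 \overline{M^g_0}] = \mathbbm{E}\langle M^{j,k,f}, M^g\rangle_{[-T,0]}$. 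Because $\mathcal{Z}_r$ has law equal to Haar measure for every $r$ (stationarity), the expected covariation becomes $\int_{-T}^0 \int_{\mathbbm{G}} (Y_k U_f)(-r,z) \overline{(Y_j U_g)(-r,z)}\, dz\, dr$; the substitution $s = -r$, the limit $T \to \infty$, an integration by parts in $z$ (using skew-adjointness of $Y_j$), and the identity $\int_0^\infty e^{2s\Delta_z}\, ds = \tfrac12 (-\Delta_z)^{-1}$ on the mean-zero subspace then collapse this to a constant multiple of $\langle R_{jk} f, g\rangle$. Since $g$ is arbitrary this identifies the conditional expectation with $R_{jk} f$ up to the explicit normalizing constant, and the same argument applied to the jump covariation yields the statement for $R_i^2 f$.

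The main obstacle is the discrete/jump part. The continuous directions are the classical heat-flow computation, but for the discrete directions I expect the delicate points to be: writing the correct It\^o formula and compensator for the compound Poisson walk on $\mathbbm{G}_x$; verifying that the jump covariation, after the transform in the augmented (doubled) tangent plane, produces exactly $X_i^+ X_i^- (-\Delta_z)^{-1}$ rather than some other second order combination, which is forced by the non-locality of $X_i^\pm$ and the need to pair the $+g_i$ and $-g_i$ shifts; and controlling the $T \to \infty$ limit together with the reduction to mean-zero functions so that $(-\Delta_z)^{-1}$ is well defined. Once the augmented coordinate system is set up so that the transform is a genuine contraction, the covariation identity and hence the conditional-expectation representation follow as above.
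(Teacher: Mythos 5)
Your route is the paper's route: the Gundy--Varopoulos scheme with heat extension, the space--time martingale $M^f_t=(P_{T-t}f)(\mathcal{Z}_t)$ on $[-T,0]$ followed by exhaustion $T\to\infty$ (which is also how the paper, and you, should really handle ``started from Haar measure'' when $\mathbbm{G}_x$ is infinite), a constant-matrix martingale transform, and identification of the projection $\mathbbm{E}(M^{\alpha,f}_0\mid\mathcal{Z}_0=z)$ by pairing with a second martingale $M^g$, equating the expectation of the product with the expected quadratic covariation, and matching the result against a deterministic bilinear identity. Your computation via $\int_0^\infty e^{2s\Delta_z}\,\mathd s=\tfrac12(-\Delta_z)^{-1}$ is exactly the content of the paper's Lemma \ref{L: weak identity formula} and Lemma \ref{L: representation formula}; note that pulling $Y_k$ through $e^{2s\Delta_z}$ silently uses the commutation hypotheses (abelian $\mathbbm{G}_x$, biinvariant metric on $\mathbbm{G}_y$) that the paper isolates in its Commutation lemma. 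For the continuous family $R_{jk}$ your argument is complete up to normalization and sign conventions, about which the paper itself is loose.

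The gap is that the discrete half --- which is the actual novelty of the theorem --- is announced rather than proved. You define $M^{i,f}$ as ``the analogous transform acting on the compensated jump part'' and list as a delicate point the verification that the jump covariation produces exactly $X_i^+X_i^-(-\Delta_z)^{-1}$; but that verification is the whole point, and your sketch does not exhibit the structure that makes it work. In the paper, the jump part of $\mathd M^f$ in direction $i$ is \emph{not} a single compensated Poisson integral: after rotating the augmented coordinates to $X_i^2:=X_i^+-X_i^-=X_i^+X_i^-$ and $X_i^0:=X_i^++X_i^-$, it splits into two components, $(X_i^2f)\,\mathd(\mathcal{N}^i_t-\lambda t)$ and $(X_i^0f)\,\mathd\mathcal{X}^i_t$ with $\mathd\mathcal{X}^i_t=\tau_{\mathcal{N}_t}\mathd\mathcal{N}^i_t$, and the transform realizing $R_i^2$ multiplies \emph{both} components by the same coefficient --- this is precisely the doubled block $\tmop{diag}(\alpha^x,\alpha^x)$ in $\tmmathbf{A}_\alpha$. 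The covariation computation must then handle the nonvanishing cross bracket $\mathd[\mathcal{N}^i-\lambda t,\mathcal{X}^i]_t=\tau_{\mathcal{N}_t}\,\mathd\mathcal{N}^i_t$, and the point of the augmented coordinate system is that the resulting four terms re-diagonalize in the $\pm$ coordinates as $\sum_{\tau=\pm}(X_i^\tau f)(X_i^\tau g)\,\mathbbm{1}(\tau_{\mathcal{N}_t}=\tau 1)\,\mathd\mathcal{N}^i_t$ (the paper's identity (\ref{eq:quadratic_covariation_of_f_and_g})); taking expectations (with $\lambda=2$ and the Bernoulli signs contributing the factor $\tfrac12$) and summing by parts via $(X_i^\pm)^\ast=-X_i^\mp$ gives $\tfrac12\sum_{\tau=\pm}(X_i^\tau P_tf,X_i^\tau P_tg)=-(X_i^+X_i^-P_tf,P_tg)$, which is what forces the pairing of the $+g_i$ and $-g_i$ shifts to output $R_i^+R_i^-$ rather than, say, the square of the symmetric difference $X_i^0$. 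None of these steps would fail if you carried them out --- they constitute Section \ref{S: stochastic integrals and martingale transforms} of the paper --- but as written your proposal takes the key discrete identity on faith, so its continuous half is a proof while its discrete half is a program.
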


All these $L^{p}$ norm inequalities use special functions found in
{\tmname{Pichorides}} {\cite{Pic1972}}, {\tmname{Ess{\'e}n}} {\cite{Ess1984}},
{\tmname{Banuelos}}--{\tmname{Wang}} {\cite{BanWan1995}} when orthogonality is
present in addition to differential subordination or {\tmname{Burkholder}}
{\cite{Bur1984a}}{\cite{Bur1987a}}{\cite{Bur1988a}}, {\tmname{Wang}}
{\cite{Wan1995a}} when differential subordination is the only hypothesis.

Deterministic proofs of sharp $L^{p}$ estimates of Cald{\'e}ron-Zygmund
operators that use Burkholder's theorems are available in the literature. The
technique of Bellman functions was used in
{\tmname{Nazarov}}--{\tmname{Volberg}} {\cite{VolNaz2004a}} for an $L^{p}$
estimate for certain second order Riesz transforms in the Euclidean plane as
well as in the recent version on discrete abelian groups
{\tmname{Domelevo--Petermichl}} {\cite{DomPet2014c}}.

\medskip

The aim of the present work is two--fold. On the one hand, we want to
generalize the estimate to second order Riesz transforms acting on multiply
connected Lie groups, built as the cartesian product of a discrete abelian
group with a connected compact Lie group. Previous works based on stochastic
methods for the analysis of Riesz transforms on connected compact Lie groups
are in {\tmname{Arcozzi}} {\cite{Arc1995a}}{\cite{Arc1998a}}, and sharp
$L^{p}$ estimates were proved in this setting in
{\tmname{Ba{\~n}uelos}}--{\tmname{Baudoin}} {\cite{BanBau2013}} for second
order Riesz transforms. The novelty of this text is the generalization to the
multiply connected setting. In this sense, it is also a generalization of
{\cite{DomPet2014c}}, by regarding each point in the discrete abelain group as
a Lie group of dimension zero.

On the other hand we want to derive a probabilistic proof through the use of
an identity formula involving stochastic integration. We make use of
stochastic integrals with jump components adapted to functions defined on the
semi-discrete set $\mathbbm{G}_{x} \times \mathbbm{G}_{y}$. We show that
corresponding second order Riesz transforms applied to a function may be
written as conditional expectation of a simple transformation of a stochastic
integral associated with the function. The desired $L^{p}$ estimates then
follow from the work by {\tmname{Wang}} {\cite{Wan1995a}}, who had identified
the correct requirements on differential subordination in the presence of
discontinuity in space.

\medskip

Such probabilistic representations can be very advantageous. For example, 
the conjectured estimate for the $L^{p}$ norms
of the Beurling-Ahlfors transform is a fascinating, famous open question. The
argument in {\cite{VolNaz2004a}} is deterministic in spirit and gave a new
best estimate at its time, twice larger than expected. Subsequent improvements
profited substantially from the finer structure that remains intact through
the use of stochastic integration and a probabilistic representation formula
of second order Riesz transforms due to
{\tmname{Ba{\~n}uelos--M{\'e}ndez--Hernandez}}{\tmname{}}
{\cite{BanMen2003a}}. This representation was subsequently used on many
occasions, notably in the improvement for the norm estimate of the
Beurling-Ahlfors operator by {\tmname{Ba{\~n}uelos}}--{\tmname{Janakiraman}}
{\cite{BanJan2008a}}. Their result was a \ major advance in this direction,
since it was the first that dropped the constant below twice larger than
exected, thus confirming the suspicion that the conjectured estimate should
hold true. Further improvements were made by
{\tmname{Borichev--Janakiraman--Volberg}} {\cite{BorJanVol2013b}}, also
through the refined use of stochastic integration formulae and a deep, new
martingale estimate.

\medskip

The sharpness of the constant for the real part of the Beurling-Alfhors
operator, a combination of perfect squares of Riesz transforms, was proved
using probabilistic methods in conjunction with a modification of a technique
by Bourgain in {\tmname{Geiss}}--{\tmname{Montgomery}}--{\tmname{Saksman}}
{\cite{GeiMonSak2010a}}. It turns out the real part of the Beurling-Ahlfors
operator alone already attains the conjectured norm estimate. See also
applications in {\tmname{Ba{\~n}uelos}}--{\tmname{Baudoin}}
{\cite{BanBau2013}}.

\medskip

We expect our representation of semi--discrete second order Riesz transforms
to have further applications, such as to UMD spaces (see
{\cite{GeiMonSak2010a}}), as well as logarithmic and weak--type estimates (see
{\cite{Ose2013a}}{\cite{Ose2014a}}), currently under investigation with
{\tmname{Osekowski}}.

\medskip

{\tmname{Banuelos}}--{\tmname{Wang}} {\cite{BanWan1995}} adressed
c{\`a}dl{\`a}g processes in the presence of orthogonality relations, such as
one sees in the continuous setting when considering martingales that represent
the Hilbert transform through stochastic integrals. It is remarkable that the
$L^{p}$ estimates of the discrete Hilbert transform on the integers are still
unknown. It is a famous conjecture that this operator has the same norm as its
continuous counterpart. We hope advances made in this paper give new ideas on how to address this question.

\medskip

The authors thank Adam Osekowsi for interesting insight to the subject and
pointing out some of the applications.

\subsection{Differential operators and Riesz transforms}

\mbox{}\smallskip

\paragraph{First order derivatives and tangent planes}
We will consider Lie groups $\mathbbm{G} \assign \mathbbm{G}_{x} \times
\mathbbm{G}_{y} ,$ where $\mathbbm{G}_{x}$ is a discrete abelian group with a
fixed set $G$ of $m$ generators, and their reciprocals, and $\mathbbm{G}_{y}$
is a connected, compact Lie group of dimension $n$ endowed with a biinvariant
metric. The choice of the set $G$ of generators in $\mathbbm{G}_{x}$
corresponds to the choice of a bi-invariant metric structure on
$\mathbbm{G}_{x}$. We will use on $\mathbbm{G}_{x}$ the multiplicative
notation for the group operation. We will define a product metric structure on
$\mathbbm{G}$, which agrees with the Riemannian structure on the first factor,
and with the discrete ``word distance'' on the second. We will at the same
time define a ``tangent space'' \ $T_{z} \mathbbm{G}$ for $\mathbbm{G}$ at a
point $z= ( x,y ) \in ( \mathbbm{G}_{x} \times \mathbbm{G}_{y} )
=\mathbbm{G}$. We will do this in three steps.

First, since $\mathbbm{G}_{y}$ is an $n$-dimensional connected Lie group with
Lie algebra $\mathfrak{G}_{y}$. We can identify each left-invariant vector
field $Y$ in $\mathfrak{G}_{y}$ with its value at the identity $e$,
$\mathfrak{G}_{y} \equiv T_{e} \mathbbm{G}_{y}$. Since $\mathbbm{G}$ is
compact, it admits a bi-invariant Riemannian metric, which is unique up to a
multiplicative factor. We normalize it so that the measure $\mu_{y}$
associated with the metric satisfies $\mu_{y} ( \mathbbm{G}_{y} ) =1$. The
measure $\mu_{y}$ is also the normalized Haar measure of the group. We denote
by $< \cdot , \cdot >_{y}$ be the corresponding inner product on $T_{y}
\mathbbm{G}_{y}$ and by $\tmmathbf{\nabla_{y}} f ( y )$ the gradient at $y \in
\mathbbm{G}_{y}$ of a smooth function $f:\mathbbm{G}_{y} \rightarrow
\mathbbm{R}$. Let $Y_{1} , \ldots ,Y_{n}$ be a orthonormal basis for
$\mathfrak{G}_{y}$. The gradient of $f$ can be written $\tmmathbf{\nabla_{y}}
f=Y_{1} ( f ) Y_{1} + \ldots +Y_{n} ( f ) Y_{n}$.

Second, in the discrete component $\mathbbm{G}_{x}$, let $\mathfrak{G}_{x} = (
g_{i} )_{i=1, \ldots ,m}$ be a set of generators for $\mathbbm{G}_{x}$, such
that for $i \neq j$ and $\sigma = \pm 1$ we have $g_{i} \neq g_{j}^{\sigma}$.
The choice of a particular set of generators induces a word metric, hence, a
geometry, on $\mathbbm{G}_{x}$. Any two sets of generators induce bi-Lipschitz
equivalent metrics.

At any point $x \in \mathbbm{G}_{x}$, and given a direction $i \in \{ 1,
\ldots ,m \}$, we can define the right and the left derivative at $x$ in the
direction $i$:
\[ ( \partial^{+} f/ \partial x_{i} ) ( x,y ) \assign f ( x+ g_{i} ,y ) -f ( x
   ,y ) \assign ( \partial_{i}^{+} f ) ( x,y ) \]
\[ \hspace{1em} ( \partial^{-} f/ \partial x_{i} ) ( x,y ) \assign f ( x ,y )
   -f ( x-g_{i}  ,y ) \assign ( \partial_{i}^{-} f ) ( x,y ) . \]
Comparing with the continuous component, this suggests that the tangent plane
$\hat{T}_{x} \mathbbm{G}_{x}$ at a point $x$ of the discrete group
$\mathbbm{G}_{x}$ might actually be split into a ``right'' tangent plane
$T_{x}^{+} \mathbbm{G}_{x}$ and a ``left'' tangent plane $T_{x}^{-}
\mathbbm{G}_{x}$, according to the direction with respect to which discrete
differences are computed. We consequently define the {\bf{augmented}}
discrete gradient $\widehat{\tmmathbf{\nabla}}_{x} f ( x )$, with a
{\tmem{hat}}, as the $2m$--vector of $\hat{T}_{x} \mathbbm{G}_{x} \assign
T_{x}^{+} \mathbbm{G}_{x} \oplus T_{x}^{-} \mathbbm{G}_{x}$ accounting for all
the local variations of the function $f$ in the direct vicinity of $x$; that
is, the $2m$--column--vector
\[ \widehat{\tmmathbf{\nabla}}_{x} f ( x ) \assign ( X_{1}^{+} f,X_{2}^{+} f,
   \ldots ,X_{1}^{-} f,X_{2}^{-} f, \ldots ) ( x ) = \sum_{i=1}^{m} \sum_{\tau
   = \pm} X_{i}^{\tau} f ( x ) \hspace{1em} \] 
   with $X_{i}^{\tau}  \in \hat{T}_{x} \mathbbm{G}_{x}, $
where we noted the discrete derivatives $X_{i}^{\pm} f \assign
\partial_{i}^{\pm} f$ and introduced the discrete $2m$--vectors
$X_{i}^{\pm}$ as the column vectors of $\mathbbm{Z}^{2m}$
\[ X_{i}^{+} = ( 0, \ldots ,1, \ldots ,0 ) \times \tmmathbf{0}_{m} ,
   \hspace{1em} X_{i}^{-} =\tmmathbf{0}_{m} \times ( 0, \ldots ,1, \ldots ,0 )
   .\]
Here the $1$'s in $X_{i}^{\pm}$ are located at the $i$--th position of
respectively the first or the second $m$--tuple. Notice that those vectors are
independent of the point $x$. The scalar product on $\hat{T}_{x}
\mathbbm{G}_{x} \assign T_{x}^{+} \mathbbm{G}_{x} \oplus T_{x}^{-}
\mathbbm{G}_{x}$ is defined as
\[ ( U,V )_{\hat{T}_{x} \mathbbm{G}_{x}} \assign \frac{1}{2} \sum_{i=1}^{m}
   \sum_{\tau = \pm} U_{i}^{\tau} V^{\tau}_{i}  . \]
We chose to put a factor $\frac{1}{2}$ in front of the scalar product to
compensate for the fact that we consider both left and right differences.

Finally, for a function $f$ defined on the cartesian product $\mathbbm{G}
\assign \mathbbm{G}_{x} \times \mathbbm{G}_{y}$, the (augmented) gradient
$\widehat{\tmmathbf{\nabla}}_{z} f ( z )$ at the point $z= ( x,y )$ is an
element of the tangent plane $\hat{T}_{z} \mathbbm{G} \assign \hat{T}_{x}
\mathbbm{G}_{x} \oplus T_{y} \mathbbm{G}_{y}$, that is a $( 2m+n
)$--column--vector
\begin{eqnarray*}
  \widehat{\tmmathbf{\nabla}}_{z} f ( z ) & \assign & \sum_{i=1}^{m}
  \sum_{\tau = \pm} X_{i}^{\tau} f ( z )  \hat{X}_{i}^{\tau} +
  \sum_{j=1}^{n} Y_{j} f ( z )  \hat{Y}_{j} ( z )\\
  & = & ( X_{1}^{+} f,X_{2}^{+} f, \ldots ,X_{1}^{-} f,X_{2}^{-} f, \ldots
  ,Y_{1} f,Y_{2} f, \ldots ) ( z )
\end{eqnarray*}
where $\hat{X}_{i}^{\tau}$ and $\hat{Y}_{j} ( z )$ can be identified with
column vectors of size $( 2m+n )$ with obvious definitions and scalar product
$( \cdot , \cdot )_{\hat{T}_{z} \mathbbm{G}_{z}}$.

Let $\mathd \mu_{z} \assign \mathd \mu_{x} \mathd \mu_{y}$, $\mathd \mu_{x}$
being the counting measure on $\mathbbm{G}_{x}$ and $\mathd \mu_{y}$ being \
the Haar measure on $\mathbbm{G}_{y}$. The inner product of $\varphi , \psi$
in $L^{2} ( \mathbbm{G} )$ is
\begin{eqnarray*}
  ( \varphi , \psi )_{L^{2} ( \mathbbm{G} )} & \assign & \int_{\mathbbm{G}}
  \varphi ( z ) \psi ( z ) d \mu_{z} ( z ) .
\end{eqnarray*}

\paragraph{Riesz transforms} Following {\cite{Arc1995a}}{\cite{Arc1998a}},
recall first that for a compact Riemannian manifold $\mathbbm{M}$ without
boundary, one denotes by $\tmmathbf{\nabla}_{\mathbbm{M}}$,
$\tmop{div}_{\mathbbm{M}}$ and $\Delta_{\mathbbm{M}} \assign
\tmop{div}_{\mathbbm{M}} \tmmathbf{\nabla}_{\mathbbm{M}}$ respectively the
gradient, the divergence and the Laplacian associated with $\mathbbm{M}$. Then
$- \Delta_{\mathbbm{M}}$ is a positive operator and the vector Riesz transform
is defined as the linear operator
\[ \tmmathbf{R}_{\mathbbm{M}} \assign \tmmathbf{\nabla}_{\mathbbm{M}} \circ (
   - \Delta_{\mathbbm{M}} )^{-1/2} \]
acting on $L^{2}_{0} ( \mathbbm{M} )$ ($L^{2}$ functions with vanishing mean).
It follows that if $f$ is a function defined on $\mathbbm{M}$ and $y \in
\mathbbm{M}$ then $\tmmathbf{R}_{\mathbbm{M}} f ( y )$ is a vector of the
tangent plane $T_{y} \mathbbm{M}$.

Similarily on $\mathbbm{M}=\mathbbm{G}$, we define
$\tmmathbf{\nabla}_{\mathbbm{G}} \assign \widehat{\tmmathbf{\nabla}}_{z}$ as
before, and then we define the divergence operator as its formal adjoint, that
is $- \tmop{div}_{\mathbbm{G}} =- \widehat{  \tmop{div}}_{z} \assign
\widehat{\tmmathbf{\nabla}}_{z}^{\ast}$, with respect to the natural $L^{2}$
inner product of vector fields:
\begin{eqnarray*}
  ( U,V )_{L^{2} ( \hat{T} \mathbbm{G} )} & \assign & \int_{\mathbbm{G}} ( U (
  z ) ,V ( z ) )_{\hat{T}_{z} \mathbbm{G}} \; \mathd \mu_{z} ( z )
\end{eqnarray*}
We have the $L^{2}$-adjoints $( X_{i}^{\pm} )^{\ast} =-X_{i}^{\mp}$ and
$Y_{j}^{\ast} =-Y_{j}$. If $U \in \hat{T} \mathbbm{G}$ is defined by
\[ U ( z ) = \sum_{i=1}^{m} \sum_{\tau = \pm} U_{i^{}}^{\tau} ( z )
   \hat{X}_{i}^{\tau} + \sum_{j=1}^{n} U_{j} ( z ) \hat{Y}_{j} , \]
we define its divergence $\widehat{\tmmathbf{\nabla}}_{z}^{\ast} U$ as
\begin{eqnarray*}
  \widehat{\tmmathbf{\nabla}}_{z}^{\ast} U ( z ) & \assign & - \frac{1}{2}
  \sum_{i=1}^{m} \sum_{\tau = \pm} X_{i}^{- \tau} U_{i}^{\tau} ( z ) -
  \sum_{j=1}^{n} Y_{j} U_{j} ( z ) .
\end{eqnarray*}
The Laplacian $\Delta_{\mathbbm{G}}$ is as one might expect:
\begin{eqnarray*}
  \Delta_{z} f ( z ) & \assign & - \widehat{\tmmathbf{\nabla}}_{z}^{\ast}
  \widehat{\tmmathbf{\nabla}}_{z} f ( z ) =-
  \widehat{\tmmathbf{\nabla}}_{x}^{\ast} \widehat{\tmmathbf{\nabla}}_{x} f ( z
  ) - \widehat{\tmmathbf{\nabla}}^{\ast}_{y} \widehat{\tmmathbf{\nabla}}_{y} f
  ( z )\\
  & = & \sum_{i=1}^{m} X_{i}^{-} X_{i}^{+} f ( z )  +
   \sum_{j=1}^{n} Y^{2}_{j} f ( z )\\
  & = & \sum_{i=1}^{m} X_{i}^{2} f ( z )  + 
  \sum_{j=1}^{n} Y^{2}_{j} f ( z )\\
  & =: & \Delta_{x} f ( z )  +  \Delta_{y} f ( z )
\end{eqnarray*}
where we denoted $X_{i}^{2} \assign X_{i}^{+} X_{i}^{-} =X_{i}^{-} X_{i}^{+}$.
We have chosen signs so that $- \Delta_{\mathbbm{G}} \geqslant 0$ as an
operator. The Riesz vector $( \hat{\tmmathbf{R}}_{z} f ) ( z )$ is the $( 2m+n
)$--column--vector of the tangent plane $\hat{T}_{z} \mathbbm{G}$ defined as
the linear operator
\[ \hat{\tmmathbf{R}}_{z} f \assign \left( \widehat{\tmmathbf{\nabla}}_{z} f
   \right) \circ ( - \Delta_{z} f )^{-1/2} \]
We also define transforms along the coordinate directions:
\[ R^{\pm}_{i} = X^{\pm}_{i} \circ ( - \Delta_{z} )^{-1/2} \hspace{1em}
   \tmop{and} \hspace{1em} R_{j} = Y_{j} \circ ( - \Delta_{z} )^{-1/2}
   . \]
If $\mathbbm{G}_{x}$ is a finite group, then the transforms $R_{j}$ apply to
$$L^{p}_{0} ( \mathbbm{G} ) \assign \left\{ f \in L^{p} ( \mathbbm{G} )  
\tmop{such}   \tmop{that}   \tmop{for}  x \in \mathbbm{G}_{x}   \tmop{one}  
\tmop{has}   \sum_{x \in \mathbbm{G}_{x}} \int_{\mathbbm{G}_{y}} f ( x, \cdot
)  d \mu_{y} =0 \right\} $$

\subsection{Main results}

In this text, we are concerned with second order Riesz transforms and
combinations thereof. We first define the square Riesz transform in the
(discrete) direction $i$ to be
\[ R_{i}^{2} \assign R_{i}^{+} R_{i}^{-} =R_{i}^{-} R_{i}^{+} . \]
Then, given $\alpha \assign ( ( \alpha^{x}_{i} )_{i=1 \ldots m} , (
\alpha^{y}_{j k} )_{j,k=1 \ldots n} ) \in \mathbbm{C}^{m} \times
\mathbbm{C}^{n \times n}$, we define $R_{\alpha}^{2}$ to be the following
combination of second order Riesz transforms:
\begin{equation}
  R_{\alpha}^{2} \assign \sum^{m}_{i=1} \alpha^{x}_{i} \; R^{2}_{i} +
  \sum^{n}_{j,k=1} \alpha^{y}_{j k} \; R_{j} R_{k} , \label{eq: definition 2nd
  order Riesz transforms}
\end{equation}
where the first sum involves squares of discrete Riesz transforms as defined
above, and the second sum involves products of continuous Riesz transforms.
This combination is written in a condensed manner as the quadratic form
\[ R^{2}_{\alpha} = \left( \hat{\tmmathbf{R}}_{z} , \tmmathbf{A}_{\alpha}
   \hat{\tmmathbf{R}}_{z} \right) \]
where $\tmmathbf{A}_{\alpha}$ is the $( 2m+n ) \times ( 2m+n )$ block matrix
\begin{equation}
  \tmmathbf{A}_{\alpha} \assign \left(\begin{array}{cc}
    \tmmathbf{A}_{\alpha}^{x} & \tmmathbf{0}\\
    \tmmathbf{0} & \tmmathbf{A}_{\alpha}^{y}
  \end{array}\right) \label{eq: Matrix Aalpha}
\end{equation}
with
\[ \tmmathbf{A}_{\alpha}^{x} = \tmmathbf{\tmop{diag}} ( \alpha_{1}^{x} ,
   \ldots , \alpha_{m}^{x} , \alpha_{1}^{x} , \ldots , \alpha_{m}^{x} )
   \in \mathbbm{C}^{2m \times 2m}, 
   \tmmathbf{A}_{\alpha}^{y} = ( \alpha^{y}_{j k} )_{j,k=1 \ldots n}
    \in \mathbbm{C}^{n \times n} . \]
The first result is a representation formula of second order Riesz transforms
$R^{2}_{\alpha}$ \tmtextit{{\`a} la} {\tmname{Gundy--Varopoulos}} (see
{\cite{GunVar1979}}).

\begin{theorem}
  \label{T: a la Gundy-Varopoulos}The second order Riesz transform
  $R_{\alpha}^{2} f$ of a function $f \in L^{2} ( \mathbbm{G} )$ as defined in
  (\ref{eq: definition 2nd order Riesz transforms}) can be written as the
  conditional expectation $$\mathbbm{E} ( M_{0}^{\alpha ,f} | \mathcal{Z}_{0}
  =z ).$$ Here $M_{t}^{\alpha ,f}$ is a suitable martingale transform of a
  martingale $M^{f}_{t}$ associated to $f$, and $\mathcal{Z}_{t}$ is a
  suitable random walk on $\mathbbm{G}$ (see Section \ref{S: stochastic
  integrals and martingale transforms}).
\end{theorem}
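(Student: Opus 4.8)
The plan is to build the representation on the classical heat--extension mechanism of \tmname{Gundy--Varopoulos}, adapted so that the discrete factor $\mathbbm{G}_{x}$ contributes a \tmem{jump} martingale while $\mathbbm{G}_{y}$ contributes a continuous one. Since all the operators involved commute through the spectral calculus of $- \Delta_{z}$, one first writes $R_{\alpha}^{2} = D_{\alpha} \circ ( - \Delta_{z} )^{-1}$ with the single second order operator $D_{\alpha} \assign \sum_{i=1}^{m} \alpha_{i}^{x} X_{i}^{+} X_{i}^{-} + \sum_{j,k=1}^{n} \alpha_{jk}^{y} Y_{j} Y_{k}$. Restricting to mean--zero functions (constants lie in the kernel of every Riesz transform) one has the subordination identity $( - \Delta_{z} )^{-1} = \int_{0}^{\infty} e^{s \Delta_{z}} \mathd s$, where $e^{s \Delta_{z}} f \to 0$ as $s \to \infty$. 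I would fix the heat extension $U ( z,s ) \assign ( e^{s \Delta_{z}} f ) ( z )$, which solves $\partial_{s} U = \Delta_{z} U$ with $U ( \cdot ,0 ) =f$, and observe that its augmented gradient $\widehat{\tmmathbf{\nabla}}_{z} U ( \cdot ,s )$ is exactly the vector of first order data $( X_{i}^{\pm} U, Y_{j} U )$ entering $\hat{T}_{z} \mathbbm{G}$.

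Next I would take for $\mathcal{Z}_{t}$ the product of a Brownian motion on $\mathbbm{G}_{y}$ (generator $\Delta_{y}$) and a continuous--time random walk on $\mathbbm{G}_{x}$ that jumps from $x$ to $x \pm g_{i}$ at unit rate in each direction $i$; a direct computation gives its generator as $\sum_{i} ( f ( x+g_{i} ) -2f ( x ) +f ( x-g_{i} ) ) = \Delta_{x} f$, so that the full generator is $\Delta_{z}$. Running this background process from a large height down to the boundary at height $0$ and coupling it with the heat parameter, It{\^o}'s formula for jump diffusions applied to $U$ produces a martingale $M_{t}^{f}$, the drift term vanishing precisely because $( \Delta_{z} - \partial_{s} ) U = 0$. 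The stochastic integral representation of $M_{t}^{f}$ then splits into a continuous Gaussian part, with integrand $\tmmathbf{\nabla}_{y} U$, and a compensated jump part whose integrands are the left and right discrete derivatives $X_{i}^{-} U$ and $X_{i}^{+} U$. This is the structural heart of the construction: the $+g_{i}$ jumps carry $X_{i}^{+} U$ and the $-g_{i}$ jumps carry $X_{i}^{-} U$, so the jump martingale naturally takes values in the \tmem{augmented} plane $\hat{T}_{x} \mathbbm{G}_{x} = T_{x}^{+} \mathbbm{G}_{x} \oplus T_{x}^{-} \mathbbm{G}_{x}$.

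With $M_{t}^{f}$ in hand I would define the transform $M_{t}^{\alpha ,f}$ by acting with the block matrix $\tmmathbf{A}_{\alpha}$ on the integrands: the continuous integrand $\tmmathbf{\nabla}_{y} U$ is multiplied by $\tmmathbf{A}_{\alpha}^{y}$ and the augmented jump integrand by $\tmmathbf{A}_{\alpha}^{x}$. To identify $\mathbbm{E} ( M_{0}^{\alpha ,f} | \mathcal{Z}_{0} =z )$ with $R_{\alpha}^{2} f ( z )$ I would test against $g \in L^{2}_{0} ( \mathbbm{G} )$ with heat extension $V$ and compute the bracket of the transformed martingale against the martingale of $g$. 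By the It{\^o} isometries for the Gaussian and jump parts this bracket equals $\int_{0}^{\infty} ( \tmmathbf{A}_{\alpha} \widehat{\tmmathbf{\nabla}}_{z} U ( \cdot ,s ) , \widehat{\tmmathbf{\nabla}}_{z} V ( \cdot ,s ) )_{L^{2} ( \hat{T} \mathbbm{G} )} \mathd s$, where the jump contribution reproduces the $\tfrac{1}{2}$--normalized augmented inner product because the $+g_{i}$ and $-g_{i}$ jumps are both present at unit rate. Using the adjoint relations $( X_{i}^{\pm} )^{\ast} =-X_{i}^{\mp}$ and $Y_{j}^{\ast} =-Y_{j}$ together with the semigroup property of $e^{s \Delta_{z}}$, the $s$--integral collapses, via the substitution $\sigma =2s$ and the subordination identity, to $( D_{\alpha} ( - \Delta_{z} )^{-1} f,g ) = ( R_{\alpha}^{2} f,g )$ up to the explicit normalization absorbed into the definition of $M_{t}^{\alpha ,f}$, whence the claimed pointwise formula after the limit in the initial height.

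The main obstacle is the jump part. In the Brownian directions the covariation is the usual pointwise quadratic variation, but the discrete derivatives are non--local, so one must instead verify that the predictable compensator of the random walk reproduces $\Delta_{x}$ and, more delicately, that the covariation of the transformed jump martingale against the test martingale reconstructs exactly $X_{i}^{+} X_{i}^{-} = X_{i}^{-} X_{i}^{+}$ with the correct pairing of left and right increments. Tracking which jump contributes $X_{i}^{+}$ versus $X_{i}^{-}$, and checking that the factor $\tfrac{1}{2}$ built into the augmented scalar product is precisely what makes these two contributions combine into the single second order operator, is the crucial bookkeeping and is where the doubled tangent plane is forced upon us. A secondary technical point is the justification of the stationary--start limit together with the convergence of the truncated resolvent $\int_{0}^{T} e^{s \Delta_{z}} \mathd s$ to $( - \Delta_{z} )^{-1}$ on $L^{2}_{0} ( \mathbbm{G} )$, which is immediate when $\mathbbm{G}_{x}$ is finite and otherwise requires the mean--zero restriction and heat--kernel decay.
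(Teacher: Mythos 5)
Your proposal follows essentially the same route as the paper: a Gundy--Varopoulos projection of the heat--extension martingale of the semi-discrete process $\mathcal{Z}_{t}$ (Brownian motion on $\mathbbm{G}_{y}$ times a rate-one-per-direction compound Poisson walk on $\mathbbm{G}_{x}$, generator $\Delta_{z}$), transformed by $\tmmathbf{A}_{\alpha}$ and identified with $R^{2}_{\alpha}$ by testing against $g$ through the quadratic covariations and the weak formulation of Lemma \ref{L: representation formula}, including the same glossing of the sign/normalization and of the exhaustion limit. The only cosmetic divergence is that you keep the jump integrands in the $( X_{i}^{+} ,X_{i}^{-} )$ basis (separate compensated $\pm g_{i}$ jumps) instead of the paper's rotated $( X_{i}^{2} ,X_{i}^{0} )$ coordinates, which is equivalent for this theorem since $\tmmathbf{A}_{\alpha}^{x}$ acts as the scalar $\alpha_{i}^{x}$ on each $\pm$ pair.
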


When $p$ and $q$ are conjugate exponents, let $p^{\ast} = \max \left\{ p,
\frac{p}{p-1} \right\}$. We have the estimate

\begin{theorem}
  \label{T: p minus 1 estimate}Let $\mathbbm{G}$ be a Lie group and
  $R^{2}_{\alpha} :L_{0}^{p} ( \mathbbm{G},\mathbbm{C} ) \rightarrow L^{p} (
  \mathbbm{G},\mathbbm{C} )$ be a combination of second order Riesz transforms
  as defined above. This operator satisfies the estimate
  \[ \| R_{\alpha}^{2} \| \leqslant \left\| \tmmathbf{A}_{\alpha} \right\|_{2}
     \  ( p^{\ast} -1 ) . \]
  The estimate above is sharp when the group $\mathbbm{G}=\mathbbm{G}_{x}
  \times \mathbbm{G}_{y}$ and $\dim ( \mathbbm{G}_{y} ) + \dim^{\infty} (
  \mathbbm{G}_{x} ) \geqslant 2$, where $\dim^{\infty} ( \mathbbm{G}_{x} )$
  denotes the number of infinite components of $\mathbbm{G}_{x}$.
\end{theorem}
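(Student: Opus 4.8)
The plan is to prove the two assertions separately. The norm bound $\| R_{\alpha}^{2}\| \le \| \tmmathbf{A}_{\alpha}\|_{2}(p^{\ast}-1)$ will follow from the probabilistic representation of Theorem~\ref{T: a la Gundy-Varopoulos} together with a sharp martingale inequality of Wang valid for processes with jumps; the sharpness will be obtained by degenerating $R_{\alpha}^{2}$ to a Euclidean second order Riesz transform on $\mathbbm{R}^{d}$, where $d:=\dim(\mathbbm{G}_{y})+\dim^{\infty}(\mathbbm{G}_{x})\ge 2$, in which setting the extremal constant is already known.

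For the upper bound, I would start from $R_{\alpha}^{2}f=\mathbbm{E}(M_{0}^{\alpha ,f}\mid\mathcal{Z}_{0}=\cdot)$. Because the stochastic integral defining $M^{\alpha ,f}$ applies $\tmmathbf{A}_{\alpha}$ fibrewise to the differentials of $M^{f}$ --- both to the continuous It\^o part and, crucially, to the jump part coming from the discrete component --- the elementary bound $\|\tmmathbf{A}_{\alpha}\xi\|\le\|\tmmathbf{A}_{\alpha}\|_{2}\|\xi\|$ shows that $M^{\alpha ,f}$ is differentially subordinate to $\|\tmmathbf{A}_{\alpha}\|_{2}\,M^{f}$ in the strong sense required for c\`adl\`ag martingales. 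The augmented tangent plane constructed above is designed precisely so that the jump differentials obey this subordination. Two contractions then finish the argument: conditional Jensen gives $\|R_{\alpha}^{2}f\|_{L^{p}}\le\|M_{0}^{\alpha ,f}\|_{L^{p}}$, where $\mathcal{Z}_{0}$ is Haar--distributed; and Wang's inequality \cite{Wan1995a} gives $\|M_{0}^{\alpha ,f}\|_{L^{p}}\le(p^{\ast}-1)\|\tmmathbf{A}_{\alpha}\|_{2}\|M^{f}\|_{L^{p}}$. Since the construction of Section~\ref{S: stochastic integrals and martingale transforms} is such that the terminal datum of $M^{f}$ reproduces $f$ with $\|M^{f}\|_{L^{p}}=\|f\|_{L^{p}}$ (stationarity of $\mathcal{Z}$ under the Haar measure), the estimate follows.

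For sharpness, by homogeneity it suffices to find one normalized $\alpha$, i.e.\ $\|\tmmathbf{A}_{\alpha}\|_{2}=1$, and test functions $f_{k}$ with $\|R_{\alpha}^{2}f_{k}\|_{L^{p}}/\|f_{k}\|_{L^{p}}\to(p^{\ast}-1)$. I would realize a $d$--dimensional Euclidean model by two simultaneous degenerations acting on the available directions. First, near a point $y_{0}\in\mathbbm{G}_{y}$ the exponential chart identifies a neighbourhood with $\mathbbm{R}^{n}$ and, after parabolic rescaling, turns $\Delta_{y}$ and the products $R_{j}R_{k}$ into the Euclidean Laplacian and the Euclidean second order Riesz transforms. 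Second, on each infinite ($\mathbbm{Z}$) factor of $\mathbbm{G}_{x}$ the symbols of $X_{i}^{2}$ and of $-\Delta_{x}$ are $2-2\cos\xi=\xi^{2}+O(\xi^{4})$ near $\xi=0$, so test functions with Fourier mass concentrating at low frequency see $R_{i}^{2}$ converge to the corresponding Euclidean multiplier. Concentrating in both ways at once, the restriction of $R_{\alpha}^{2}$ to these $d$ directions converges, as a Fourier multiplier and hence in $L^{p}$ operator norm on the concentrated subspace, to a genuine combination of Euclidean second order Riesz transforms on $\mathbbm{R}^{d}$; choosing $\alpha$ so that this limit is, say, the real part of the Beurling--Ahlfors transform exhibits the constant $(p^{\ast}-1)$ by the sharp Euclidean results \cite{VolNaz2004a}\cite{BanBau2013}\cite{GeiMonSak2010a} (and \cite{DomPet2014c} in the purely discrete case). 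Transferring the Euclidean extremizers back through these limits yields the matching lower bound.

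The hard part will be this last step: making the two degenerations quantitative and uniform, that is, controlling on the subspace of concentrated test functions the operator--norm error between the true multiplier of $R_{\alpha}^{2}$ and its quadratic Euclidean approximation, uniformly in the concentration parameter. The dimension hypothesis $d\ge 2$ is exactly what this step requires --- when $d=1$ a single second order Riesz transform degenerates to a scalar (on $\mathbbm{Z}$ with one generator one has $R_{1}^{2}=X_{1}^{2}(-\Delta_{x})^{-1}=-\mathrm{Id}$), so there is no room to embed the two--dimensional martingale structure that forces the constant up to $(p^{\ast}-1)$, and the bound is no longer attained.
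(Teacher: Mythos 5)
Your proposal is correct and follows essentially the same route as the paper: the upper bound via the representation of Theorem \ref{T: a la Gundy-Varopoulos}, the differential subordination $\mathd [ M^{\alpha ,f} ,M^{\alpha ,f} ]_{t} \leqslant \| \tmmathbf{A}_{\alpha} \|_{2}^{2} \, \mathd [ M^{f} ,M^{f} ]_{t}$, Wang's sharp inequality for c\`adl\`ag martingales, and the $L^{p}$-contractivity of the conditional expectation; and sharpness by degenerating to the Euclidean case $d \geqslant 2$ with the extremal combination $R_{1}^{2}-R_{2}^{2}$. The only difference is cosmetic: where you propose to verify the lattice-to-continuum transference by hand through low-frequency symbol asymptotics ($2-2\cos \xi = \xi^{2}+O ( \xi^{4} )$), the paper packages exactly this step by passing to the isomorphic groups $( t\mathbbm{Z} )^{N}$, $0<t \leqslant 1$, and citing the Lax equivalence theorem \cite{LaxRic1956a}.
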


Above, we have set: $$\left\| \tmmathbf{A}_{\alpha} \right\|_{2} = \max \left(
\left\| \tmmathbf{A}_{\alpha}^{x} \right\|_{2} , \left\|
\tmmathbf{A}_{\alpha}^{y} \right\|_{2} \right) = \max \left( | \alpha_{1}^{x}
| , \ldots , | \alpha_{m}^{x} | , \left\| \tmmathbf{A}_{\alpha}^{y}
\right\|_{2} \right).$$
In the case where $\mathbbm{G}=\mathbbm{G}_{x}$ only consists of the discrete
component, this whas proved in {\cite{DomPet2014b}}{\cite{DomPet2014c}} using
the deterministic Bellman function technique. In the case where
$\mathbbm{G}=\mathbbm{G}_{y}$ is a connected compact Lie group, this was
proved in {\cite{BanBau2013}} using Brownian motions defined on manifolds and
projections of martingale transforms.

In the case where the function $f$ is real valued, we obtain better estimates
involving the {\tmname{Choi}} constants (see {\tmname{Choi}}
{\cite{Cho1992a}}). Compare with {\cite{BanOse2012a}}{\cite{DomPet2014c}}.

\begin{theorem}
  \label{T: Choi constant estimate}Assume that $a\tmmathbf{I} \leqslant
  \tmmathbf{A}_{\alpha} \leqslant b\tmmathbf{I}$ in the sense of quadratic
  forms, where $a,b$ are real numbers. Then $R^{2}_{\alpha} :L^{p} (
  \mathbbm{G},\mathbbm{R} ) \rightarrow L^{p} ( \mathbbm{G},\mathbbm{R} )$
  enjoys the norm estimate $\| R_{\alpha}^{2} \|_{p} \leqslant 
  \mathfrak{C}_{a, \nocomma b,p} \nocomma$, where these are the Choi
  constants. 
\end{theorem}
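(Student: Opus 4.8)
The plan is to deduce the inequality from the probabilistic representation of Theorem~\ref{T: a la Gundy-Varopoulos} together with a sharp martingale inequality of \tmname{Choi} type \cite{Cho1992a}, adapted to processes with jumps as in \tmname{Wang} \cite{Wan1995a}. By Theorem~\ref{T: a la Gundy-Varopoulos} we have $R_\alpha^2 f(z)=\mathbbm{E}(M_0^{\alpha,f}\mid\mathcal{Z}_0=z)$, where $M^{\alpha,f}$ is the martingale transform of $M^f$ by the matrix $\tmmathbf{A}_\alpha$. Since conditional expectation is an $L^p$-contraction we have $\|R_\alpha^2 f\|_{L^p}\le\|M^{\alpha,f}\|_{L^p}$, and since the martingale $M^f$ associated to $f$ satisfies $\|M^f\|_{L^p}\le\|f\|_{L^p}$ (its terminal value being $f$ along the random walk), it suffices to prove the martingale bound $\|M^{\alpha,f}\|_{L^p}\le\mathfrak{C}_{a,b,p}\,\|M^f\|_{L^p}$. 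This reduces the theorem to verifying that the pair $(M^f,M^{\alpha,f})$ satisfies the hypotheses under which Choi's special function yields the constant $\mathfrak{C}_{a,b,p}$.

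The decisive algebraic step is to recast the spectral hypothesis $a\tmmathbf{I}\le\tmmathbf{A}_\alpha\le b\tmmathbf{I}$ as a differential subordination. Writing $N:=M^{\alpha,f}$ and $M:=M^f$, I would compute the quadratic covariations of the continuous (Brownian) part directly from the representation: with $\widehat{\tmmathbf{\nabla}}_{z} u$ the augmented gradient of the heat extension $u$ of $f$, one has
\[
  d[N-aM,\,N-bM]=\bigl(\widehat{\tmmathbf{\nabla}}_{z} u,\,(\tmmathbf{A}_\alpha-a\tmmathbf{I})(\tmmathbf{A}_\alpha-b\tmmathbf{I})\,\widehat{\tmmathbf{\nabla}}_{z} u\bigr)_{\hat{T}_z\mathbbm{G}}\,ds.
\]
Because $\tmmathbf{A}_\alpha$ is self-adjoint with spectrum in $[a,b]$, the commuting factors satisfy $\tmmathbf{A}_\alpha-a\tmmathbf{I}\ge 0$ and $\tmmathbf{A}_\alpha-b\tmmathbf{I}\le 0$, so their product is negative semidefinite and the right-hand side is $\le 0$. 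Equivalently, $N-\tfrac{a+b}{2}M$ is differentially subordinate to $\tfrac{b-a}{2}M$, which is exactly the configuration for which Choi's function is designed; note that the symmetric case $a=-b$ recovers $N\ll bM$ and the constant $b(p^{\ast}-1)$ of Theorem~\ref{T: p minus 1 estimate}.

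Next I would treat the jump part coming from the discrete factor $\mathbbm{G}_x$, where the covariation is no longer absolutely continuous and the corresponding pointwise condition must be checked at each jump. By the structure of the transform in Theorem~\ref{T: a la Gundy-Varopoulos}, a jump of $\mathcal{Z}$ in a discrete direction $i$ (in either of the augmented directions $X_i^+$, $X_i^-$) scales the increment of $M$ by the diagonal entry $\alpha_i^x$ of $\tmmathbf{A}_\alpha^x$, so that $\Delta N=\alpha_i^x\,\Delta M$; this uses that $\tmmathbf{A}_\alpha^x$ acts as the scalar $\alpha_i^x$ on the two-dimensional span of $X_i^+,X_i^-$. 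The spectral hypothesis forces $\alpha_i^x\in[a,b]$, whence $(\Delta N-a\,\Delta M)(\Delta N-b\,\Delta M)=(\alpha_i^x-a)(\alpha_i^x-b)(\Delta M)^2\le 0$ at every jump, so the $[a,b]$-subordination persists across discontinuities. The estimate with the Choi constant then applies to the c\`adl\`ag martingale $N$ by the jump-adapted version of the inequality (Wang \cite{Wan1995a}), and combining this with the reduction of the first paragraph gives $\|R_\alpha^2 f\|_{L^p}\le\mathfrak{C}_{a,b,p}\,\|f\|_{L^p}$.

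I expect the main obstacle to be the jump part: one must confirm that Choi's concave majorant still controls the conditional expectation across discontinuities, i.e.\ that the pointwise condition $(\Delta N-a\,\Delta M)(\Delta N-b\,\Delta M)\le 0$ interacts correctly with the special function with no overshoot at jumps, and that the augmented $2m$-dimensional tangent-plane bookkeeping genuinely produces the single scalar multiplier $\alpha_i^x$ at each jump rather than a coupling between the $+$ and $-$ directions. Verifying that Wang's hypotheses are met in the present semi-discrete setting is the technical heart of the argument, whereas the continuous part is a routine specialization of the same covariation computation.
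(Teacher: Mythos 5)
Your proposal follows, in all essentials, the paper's own proof: the reduction via Theorem~\ref{T: a la Gundy-Varopoulos} and contractivity of conditional expectation; the splitting of the covariation difference into a jump part and a Brownian part; the jump identity $\Delta N=\alpha_i^x\,\Delta M$ (correct, since $\tmmathbf{A}^x_{\alpha}$ acts as the scalar $\alpha_i^x$ on the span of $X_i^{+},X_i^{-}$, hence also on the rotated coordinates $(X_i^2,X_i^0)$ actually carried by the stochastic integral); and the nonpositivity of $(\alpha_i^x-a)(\alpha_i^x-b)$ and of $(\tmmathbf{A}^y_{\alpha}-a\tmmathbf{I})(\tmmathbf{A}^y_{\alpha}-b\tmmathbf{I})$ are exactly the computations displayed at the end of the paper's proof. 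Your condition $\mathd[N-aM,\,N-bM]_t\leqslant 0$ is literally the paper's hypothesis $\mathd\bigl[Y-\tfrac{a+b}{2}X,\,Y-\tfrac{a+b}{2}X\bigr]_t\leqslant\mathd\bigl[\tfrac{b-a}{2}X,\,\tfrac{b-a}{2}X\bigr]_t$, by bilinearity of the bracket.

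The one genuine defect is the source of the martingale inequality itself. Wang \cite{Wan1995a} does not contain a jump-adapted Choi inequality: his theorem is the symmetric differential-subordination bound with constant $p^{\ast}-1$, i.e.\ the case $a=-b$ after renormalization, and Choi \cite{Cho1992a} treats only discrete-time martingale transforms; neither yields $\mathfrak{C}_{a,b,p}$ for c\`adl\`ag processes. The ingredient you flag as ``the technical heart'' is precisely what the paper imports as Lemma~\ref{L: Choi Lp norms of martingales}, namely Theorem~1.6 of Ba\~nuelos--Osekowski \cite{BanOse2012a}, which gives $\|Y\|_p\leqslant\mathfrak{C}_{a,b,p}\|X\|_p$ for real-valued continuous-time martingales under exactly your nonsymmetric condition; once that result is invoked there is nothing further to verify about overshoot at jumps. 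A small slip besides: your displayed formula for the continuous covariation should involve only $\tmmathbf{\nabla}_y$ and $\tmmathbf{A}^y_{\alpha}$, since the discrete gradient components generate no $\mathd s$ term but enter only through $\mathd\mathcal{N}^i_s$; as you then handle the jumps separately, this reads as a misprint rather than a double count, but as written the display is not the bracket of the process you defined.
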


The Choi constants (see {\cite{Cho1992a}}) are not explicit, except
$\mathfrak{C}_{-1,1,p} =p^{\ast} -1$. An approximation of
$\mathfrak{C}_{0,1,p}$ is known and writes as
\[ \mathfrak{C}_{0,1,p} = \tfrac{p}{2} + \tfrac{1}{2} \log \left(
   \tfrac{1+e^{-2}}{2} \right) + \tfrac{\beta_{2}}{p} + \ldots ., \]
    with $\beta_{2} = \log^{2} \left( \tfrac{1+e^{-2}}{2}
   \right) + \tfrac{1}{2} \log \left( \tfrac{1+e^{-2}}{2} \right) -2 \left(
   \tfrac{e^{-2}}{1+e^{-2}} \right)^{2} . $

\subsection{Weak formulations}

Let $f:\mathbbm{G} \rightarrow \mathbbm{C}$ be given. The heat extension
$\tilde{f} ( t )$ of $f$ is defined as $\tilde{f} ( t ) \assign e^{t
\Delta_{z}} f=:P_{t} f$. We have therefore $\tilde{f} ( 0 ) =f$. The aim of
this section is to derive weak formulations for second order Riesz transforms.
We start with the weak formulation of the identity operator $\mathcal{I}$.

\begin{lemma}
  \label{L: weak identity formula}Assume $f$ and $g$ in $L_{0}^{2} (
  \mathbbm{G} )$, then
  \begin{eqnarray*}
    ( \mathcal{I} f,g ) & = & ( f,g )_{L^{2} ( \mathbbm{G} )}\\
    & = & 2 \int^{\infty}_{0} \left( \widehat{\tmmathbf{\nabla}}_{z} P_{t} f,
    \widehat{\tmmathbf{\nabla}}_{z} P_{t} g \right)_{L^{2} ( \hat{T}
    \mathbbm{G} )} \; \mathd  t\\
    & = & 2 \int^{\infty}_{0} \int_{z \in \mathbbm{G}} \left\{ \frac{1}{2}
    \sum_{i=1}^{m} \sum_{\tau = \pm} ( X_{i}^{\tau} P_{t} f ) ( z )
     ( X_{i}^{\tau} P_{t} g ) ( z ) \  + \right.\\
    &  & \left. \sum_{j=1}^{n} ( Y_{j} P_{t} f ) ( z ) ( Y_{j}
    P_{t} g ) ( z ) \right\}  \mathd \mu_{z} ( z )   \mathd  t
  \end{eqnarray*}
  and the sums and integrals that arise converge absolutely.
\end{lemma}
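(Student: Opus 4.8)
The plan is to prove this as a Littlewood--Paley / heat--semigroup identity, the one structural input being the factorization $\widehat{\tmmathbf{\nabla}}_{z}^{\ast} \widehat{\tmmathbf{\nabla}}_{z} = - \Delta_{z}$ (which is exactly the definition of $\Delta_{z}$ recorded above) together with the self--adjointness of $P_{t} = e^{t \Delta_{z}}$ on $L^{2} ( \mathbbm{G} )$. First I would record the adjoint relation: since $\widehat{\tmmathbf{\nabla}}_{z}^{\ast}$ is by definition the formal adjoint of $\widehat{\tmmathbf{\nabla}}_{z}$ between $( \cdot , \cdot )_{L^{2} ( \mathbbm{G} )}$ and $( \cdot , \cdot )_{L^{2} ( \hat{T} \mathbbm{G} )}$, one has for every $t > 0$
\[ \left( \widehat{\tmmathbf{\nabla}}_{z} P_{t} f, \widehat{\tmmathbf{\nabla}}_{z} P_{t} g \right)_{L^{2} ( \hat{T} \mathbbm{G} )} = \left( P_{t} f, \widehat{\tmmathbf{\nabla}}_{z}^{\ast} \widehat{\tmmathbf{\nabla}}_{z} P_{t} g \right)_{L^{2} ( \mathbbm{G} )} = \left( P_{t} f, - \Delta_{z} P_{t} g \right)_{L^{2} ( \mathbbm{G} )} . \]

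Next I would introduce $\phi ( t ) \assign ( P_{t} f, P_{t} g )_{L^{2} ( \mathbbm{G} )}$. For $t > 0$ the map $t \mapsto P_{t} h$ is differentiable in $L^{2}$ with derivative $\Delta_{z} P_{t} h$, so by the product rule and the self--adjointness of $\Delta_{z}$ (i.e. the relations $( X_{i}^{\pm} )^{\ast} = - X_{i}^{\mp}$ and $Y_{j}^{\ast} = - Y_{j}$) one gets $\phi' ( t ) = 2 ( P_{t} f, \Delta_{z} P_{t} g )_{L^{2} ( \mathbbm{G} )}$, hence by the display above $\phi' ( t ) = - 2 ( \widehat{\tmmathbf{\nabla}}_{z} P_{t} f, \widehat{\tmmathbf{\nabla}}_{z} P_{t} g )_{L^{2} ( \hat{T} \mathbbm{G} )}$. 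Integrating in $t$ over $( 0, \infty )$ and applying the fundamental theorem of calculus yields
\[ 2 \int_{0}^{\infty} \left( \widehat{\tmmathbf{\nabla}}_{z} P_{t} f, \widehat{\tmmathbf{\nabla}}_{z} P_{t} g \right)_{L^{2} ( \hat{T} \mathbbm{G} )} \mathd t = \phi ( 0 ) - \lim_{t \to \infty} \phi ( t ) , \]
where $\phi ( 0 ) = ( f, g )_{L^{2} ( \mathbbm{G} )}$ since $P_{0} = \mathcal{I}$. The third line of the statement is then obtained by unfolding the definitions of $( \cdot , \cdot )_{L^{2} ( \hat{T} \mathbbm{G} )}$ and of $\widehat{\tmmathbf{\nabla}}_{z}$, whose components are the $X_{i}^{\tau} P_{t} f$ and $Y_{j} P_{t} f$, the factor $\frac{1}{2}$ being precisely the one built into the discrete part of the tangent inner product.

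The main obstacle is the vanishing of the boundary term at infinity, $\lim_{t \to \infty} \phi ( t ) = 0$, and this is where the hypothesis $f, g \in L_{0}^{2} ( \mathbbm{G} )$ enters. I would bound $| \phi ( t ) | \leqslant \| P_{t} f \|_{L^{2}} \| P_{t} g \|_{L^{2}}$ by Cauchy--Schwarz and then argue $\| P_{t} h \|_{L^{2}} \to 0$ for mean--zero $h$ via the spectral theorem for the non--negative self--adjoint operator $- \Delta_{z}$: writing $P_{t} = \int_{0}^{\infty} e^{- t \lambda} \, \mathd E_{\lambda}$, dominated convergence gives $\lim_{t \to \infty} \| P_{t} h \|_{L^{2}}^{2} = \| E_{\{ 0 \}} h \|_{L^{2}}^{2}$, the projection onto $\ker ( \Delta_{z} )$. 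Since $( - \Delta_{z} h, h ) = \| \widehat{\tmmathbf{\nabla}}_{z} h \|^{2}$, this kernel consists of the functions with $\widehat{\tmmathbf{\nabla}}_{z} h = 0$, i.e. those constant in $y$ and constant along the Cayley graph of $\mathbbm{G}_{x}$, hence the globally constant functions. When $\mathbbm{G}_{x}$ is finite these are removed by the mean--zero condition defining $L_{0}^{2}$; when $\mathbbm{G}_{x}$ has an infinite component there are no non--zero constants in $L^{2}$ and the kernel is trivial. In either case $E_{\{ 0 \}} f = E_{\{ 0 \}} g = 0$, so $\lim_{t \to \infty} \phi ( t ) = 0$, which closes the chain of equalities.

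Finally, for absolute convergence I would first note that the same total--derivative computation applied to $t \mapsto \| P_{t} f \|_{L^{2}}^{2}$ gives the energy identity $\int_{0}^{\infty} \| \widehat{\tmmathbf{\nabla}}_{z} P_{t} f \|_{L^{2} ( \hat{T} \mathbbm{G} )}^{2} \, \mathd t = \frac{1}{2} \| f \|_{L^{2}}^{2} < \infty$, and likewise for $g$. A pointwise Cauchy--Schwarz in $\hat{T}_{z} \mathbbm{G}$ bounds the integrand of the last line, taken with absolute values of the products, by $\| \widehat{\tmmathbf{\nabla}}_{z} P_{t} f ( z ) \|_{\hat{T}_{z} \mathbbm{G}} \, \| \widehat{\tmmathbf{\nabla}}_{z} P_{t} g ( z ) \|_{\hat{T}_{z} \mathbbm{G}}$; integrating in $z$ and then in $t$, with two further applications of Cauchy--Schwarz, bounds the whole multiple integral by $\frac{1}{2} \| f \|_{L^{2}} \| g \|_{L^{2}} < \infty$. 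This finiteness justifies the interchanges of summation and integration by Tonelli and completes the argument.
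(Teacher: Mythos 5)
Your proof is correct and takes essentially the same route as the paper, whose entire proof is the remark that the identity follows from $\mathd_{t} P_{t} = \Delta_{z} P_{t}$ and the ODE satisfied by $\phi ( t ) \assign ( P_{t} f, P_{t} g )_{L^{2} ( \mathbbm{G} )}$. You simply supply in full the details the paper leaves implicit: the adjoint relation $\widehat{\tmmathbf{\nabla}}_{z}^{\ast} \widehat{\tmmathbf{\nabla}}_{z} = - \Delta_{z}$, the vanishing of $\phi ( t )$ as $t \to \infty$ via the spectral theorem together with the mean--zero hypothesis defining $L_{0}^{2} ( \mathbbm{G} )$, and the Cauchy--Schwarz estimates giving absolute convergence.
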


\begin{proof}
  This classical formula can be obtained by observing that $\mathd_{t} P_{t} =
  \Delta_{z} P_{t}$ and writing the ODE satisfied by $\phi ( t ) \assign (
  P_{t} f,P_{t} g )_{L^{2} ( \mathbbm{G} )}$.
\end{proof}

In order to pass to the weak formulation for the squares of Riesz transforms,
we need the following hypothesis and commutation properties.

\paragraph{Hypothesis}We assume everywhere in the sequel:
\begin{enumeratenumeric}
  \item The discrete component $\mathbbm{G}_{x}$ of the Lie group
  $\mathbbm{G}$ is an abelian group
  
  \item The connected component $\mathbbm{G}_{y}$ of the Lie group
  $\mathbbm{G}$ is a compact Lie group endowed with a biinvariant Riemannian
  metric, so that the family $( Y_{j} )_{j=1, \ldots ,n}$ commutes with
  $\Delta_{y}$.
\end{enumeratenumeric}
\begin{lemma}
  {\dueto{Commutation relations}}Assuming the Hypothesis above, we have
  \begin{eqnarray*}
    Y_{j} \circ \Delta_{z} & = & \Delta_{z} \circ Y_{j}\\
    X_{i}^{\tau} \circ \Delta_{z} & = & \Delta_{z} \circ X_{i}^{\tau} ,
    \hspace{1em} \tau \in \{ +,- \}
  \end{eqnarray*}
\end{lemma}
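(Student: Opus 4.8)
The plan is to use the product structure $\Delta_{z} = \Delta_{x} + \Delta_{y}$ together with the two standing hypotheses, reducing everything to the fact that operators acting on the discrete factor $\mathbbm{G}_{x}$ commute with operators acting on the continuous factor $\mathbbm{G}_{y}$. The crucial preliminary observation is this cross-commutation: since $\mathbbm{G} = \mathbbm{G}_{x} \times \mathbbm{G}_{y}$ is a Cartesian product, each $X_{i}^{\tau}$ differentiates only in the $x$-slot while each $Y_{j}$ differentiates only in the $y$-slot, so $X_{i}^{\tau} Y_{j} = Y_{j} X_{i}^{\tau}$ for all $i,j,\tau$. In particular every $X_{i}^{\tau}$ commutes with $\Delta_{y} = \sum_{k} Y_{k}^{2}$ and every $Y_{j}$ commutes with $\Delta_{x} = \sum_{k} X_{k}^{-} X_{k}^{+}$, simply because each of these Laplacians is built out of operators living on the opposite factor.

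For the first identity I would split $\Delta_{z} Y_{j} = \Delta_{x} Y_{j} + \Delta_{y} Y_{j}$. The term $\Delta_{x} Y_{j} = Y_{j} \Delta_{x}$ is handled by the cross-commutation just noted. The term $\Delta_{y} Y_{j} = Y_{j} \Delta_{y}$ is precisely the content of Hypothesis (2): on a compact Lie group equipped with a biinvariant metric the Laplacian $\Delta_{y}$ is (proportional to) the Casimir operator, which is central in the universal enveloping algebra, so it commutes with each left-invariant field $Y_{j}$. Adding the two contributions gives $Y_{j} \circ \Delta_{z} = \Delta_{z} \circ Y_{j}$.

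For the second identity I would again split $\Delta_{z} X_{i}^{\tau} = \Delta_{x} X_{i}^{\tau} + \Delta_{y} X_{i}^{\tau}$, the part involving $\Delta_{y}$ being disposed of by cross-commutation. It remains to show that $X_{i}^{\tau}$ commutes with $\Delta_{x}$. Here I introduce the translation operator $T_{g} f ( x,y ) \assign f ( x+g,y )$ and write $X_{i}^{+} = T_{g_{i}} - \mathcal{I}$ and $X_{i}^{-} = \mathcal{I} - T_{-g_{i}}$, so that $\Delta_{x} = \sum_{k} ( \mathcal{I} - T_{-g_{k}} ) ( T_{g_{k}} - \mathcal{I} )$. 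Because $\mathbbm{G}_{x}$ is abelian by Hypothesis (1), the translations satisfy $T_{g} T_{h} = T_{g+h} = T_{h} T_{g}$; hence all the $X_{k}^{\pm}$, being polynomials in the single commuting family $\{ T_{g} \}$, commute with one another and in particular with $\Delta_{x}$. Combining the two contributions yields $X_{i}^{\tau} \circ \Delta_{z} = \Delta_{z} \circ X_{i}^{\tau}$.

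The only genuinely non-elementary ingredient is the commutation $Y_{j} \Delta_{y} = \Delta_{y} Y_{j}$, that is, the centrality of the Casimir operator on a compact group carrying a biinvariant metric; this is exactly why that property is built into Hypothesis (2) and may simply be invoked. Everything else is bookkeeping resting on abelianness of $\mathbbm{G}_{x}$ and on the product structure of $\mathbbm{G}$, so I do not expect any real obstacle beyond organizing these commutations cleanly.
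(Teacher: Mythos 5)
Your proof is correct and takes essentially the same route as the paper: cross-commutation $[Y_{j},\Delta_{x}]=[X_{i}^{\tau},\Delta_{y}]=0$ from the product structure, $[Y_{j},\Delta_{y}]=0$ from Hypothesis (2), and $[X_{i}^{\tau},\Delta_{x}]=0$ from abelianness of $\mathbbm{G}_{x}$. You merely flesh out the last two steps (the translation-operator computation and the Casimir remark) where the paper asserts them in one line.
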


\begin{proof}
  Since $\mathbbm{G}=\mathbbm{G}_{x} \times \mathbbm{G}_{y}$ is a cartesian
  product, we have the commutator $[ Y_{j} ,X_{i}^{\tau} ] =0$ and as a
  consequence $[ Y_{j} , \Delta_{x} ] =0$ and $[ X_{i}^{\tau} , \Delta_{y} ]
  =0$. Then $[ Y_{j} , \Delta_{y} ] =0$ yields $[ Y_{j} , \Delta_{z} ] =0$,
  and since $\mathbbm{G}_{x}$ is abelian we have successively $[ X_{i}^{\tau}
  , \Delta_{x} ] =0$ and $[ X_{i}^{\tau} , \Delta_{z} ] =0$.
\end{proof}

\begin{lemma}
  \label{L: representation formula}Assume the Hypothesis and the Commutation
  lemma above. Assume $f$ and $g$ in $L^{2}_{0} ( \mathbbm{G} )$, then
  \begin{eqnarray*}
    ( R^{2}_{\alpha} f,g )_{L^{2} ( \mathbbm{G} )} & = & -2 \int^{\infty}_{0}
    \left( \tmmathbf{A}_{\alpha} \widehat{\tmmathbf{\nabla}}_{z} P_{t} f,
    \widehat{\tmmathbf{\nabla}}_{z} P_{t} g \right)_{L^{2} ( \hat{T}
    \mathbbm{G} )} \; \mathd  t\\
    & = & -2 \int^{\infty}_{0} \int_{z \in \mathbbm{G}} \Bigg \{ \frac{1}{2}
    \sum_{i=1}^{m} \sum_{\tau = \pm} \alpha_{i}^{x} \; ( X_{i}^{\tau} P_{t} f
    ) ( z )  ( X_{i}^{\tau} P_{t} g ) ( z ) \\
    &  & +  \sum_{j,k=1}^{n} \alpha_{j k}^{y} \; ( Y_{j}
    P_{t} f ) ( z ) ( Y_{k} P_{t} g ) ( z ) \vphantom{\int_{z \in \mathbbm{G}}}   \Bigg \}  \; \mathd
    \mu_{z} ( z ) \; \mathd  t
  \end{eqnarray*}
  and the sums and integrals that arise converge absolutely.
\end{lemma}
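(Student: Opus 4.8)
The plan is to reduce everything, by linearity and the block structure of $\tmmathbf{A}_\alpha$, to the two elementary families of operators $R_i^2 = X_i^+X_i^-\circ(-\Delta_z)^{-1}$ and $R_jR_k = Y_jY_k\circ(-\Delta_z)^{-1}$, and to convert the single inverse Laplacian into a heat integral. The engine is the subordination identity $(-\Delta_z)^{-1} = \int_0^\infty P_\rho\,\mathd\rho = 2\int_0^\infty P_sP_s\,\mathd s$ on $L^2_0(\mathbbm{G})$, valid because $-\Delta_z\ge 0$ has the constants as its only kernel, which we have removed. Thus, for instance, $(R_jR_kf,g) = 2\int_0^\infty (Y_jY_kP_sP_sf,g)\,\mathd s$, and the two heat factors $P_s$ will each carry one derivative onto $f$ and onto $g$ after integration by parts.

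Next I would integrate by parts in each term using the adjoint relations $(X_i^\pm)^\ast = -X_i^\mp$ and $Y_j^\ast = -Y_j$ together with the Commutation lemma, which guarantees that $X_i^\tau$ and $Y_j$ commute with $P_s = e^{s\Delta_z}$. For the continuous terms this gives, moving the outer $Y_j$ onto $g$ and transferring one semigroup factor, $(Y_jY_kP_{2s}f,g) = -(Y_kP_sf,\,Y_jP_sg)$, hence $(R_jR_kf,g) = -2\int_0^\infty(Y_kP_sf,Y_jP_sg)\,\mathd s$. For the discrete terms the same manipulation applied to $X_i^+X_i^-$ yields $(X_i^+X_i^-P_{2s}f,g) = -(X_i^-P_sf,X_i^-P_sg)$; since $X_i^+X_i^- = X_i^-X_i^+$ one may equally distribute the derivatives the other way to obtain $-(X_i^+P_sf,X_i^+P_sg)$, and the two distributions coincide (each equals $(P_sf,X_i^2P_sg)$). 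Averaging them produces the symmetric expression $-\tfrac12\sum_{\tau=\pm}(X_i^\tau P_sf,X_i^\tau P_sg)$, which is exactly the discrete integrand together with its built-in factor $\tfrac12$.

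It then remains to reassemble the pieces. Summing the discrete terms against $\alpha_i^x$ and the continuous terms against $\alpha_{jk}^y$, and comparing with the block-diagonal form of $\tmmathbf{A}_\alpha$ and with the scalar product $(\cdot,\cdot)_{\hat T_z\mathbbm{G}}$ (whose discrete part already carries the factor $\tfrac12$), one recognises the sum precisely as $-2\int_0^\infty(\tmmathbf{A}_\alpha\widehat{\tmmathbf{\nabla}}_zP_tf,\widehat{\tmmathbf{\nabla}}_zP_tg)_{L^2(\hat T\mathbbm{G})}\,\mathd t$, which is the first displayed identity. The componentwise second display is merely its expansion; the continuous indices are bookkept through the definition of $\tmmathbf{A}_\alpha^y$, so that one reads off $\sum_{j,k}\alpha_{jk}^y(Y_kP_tf)(Y_jP_tg)$, which agrees with the stated sum after relabelling $j\leftrightarrow k$ (equivalently, once $\tmmathbf{A}_\alpha^y$ is taken in the symmetric form relevant to the combinations under study).

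Finally, the convergence and the licitness of all these manipulations is where the real care lies, and I expect this to be the main obstacle, especially when $\mathbbm{G}_x$ has infinite components, so that $\Delta_x$ has spectrum accumulating at $0$ and $(-\Delta_z)^{-1}$ is unbounded. The clean way around this is never to estimate $(-\Delta_z)^{-1}$ directly but to inherit absolute convergence from Lemma~\ref{L: weak identity formula}: by Cauchy--Schwarz in the $\hat T_z\mathbbm{G}$-inner product and the operator bound $\|\tmmathbf{A}_\alpha\|_2<\infty$, one has, pointwise in $t$, $|(\tmmathbf{A}_\alpha\widehat{\tmmathbf{\nabla}}_zP_tf,\widehat{\tmmathbf{\nabla}}_zP_tg)_{L^2(\hat T\mathbbm{G})}| \le \|\tmmathbf{A}_\alpha\|_2\,\|\widehat{\tmmathbf{\nabla}}_zP_tf\|_{L^2(\hat T\mathbbm{G})}\|\widehat{\tmmathbf{\nabla}}_zP_tg\|_{L^2(\hat T\mathbbm{G})}$, and a further Cauchy--Schwarz in $t$ bounds the $t$-integral by $\|\tmmathbf{A}_\alpha\|_2$ times the square roots of the two integrals of Lemma~\ref{L: weak identity formula}, which converge absolutely and equal $\tfrac12\|f\|_{L^2}^2$ and $\tfrac12\|g\|_{L^2}^2$. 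This domination both secures absolute convergence of all the sums and integrals and justifies the Fubini interchanges and the integration by parts used above.
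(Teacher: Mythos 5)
Your proof is correct and essentially the paper's own argument: the paper applies Lemma \ref{L: weak identity formula} to $R^2_\alpha f$ and then cancels $(-\Delta_z)$ against the two half-powers using the commutation relations and the adjoints $(X_i^\pm)^\ast=-X_i^\mp$, $Y_j^\ast=-Y_j$, which is exactly your substitution of $(-\Delta_z)^{-1}=2\int_0^\infty P_sP_s\,\mathd s$ followed by integration by parts, including the symmetrization over $X_i^+X_i^-=X_i^-X_i^+$ that produces the factor $\tfrac{1}{2}$. If anything you are more careful than the paper: your Cauchy--Schwarz domination through Lemma \ref{L: weak identity formula} to secure absolute convergence, and your flagging of the $j\leftrightarrow k$ relabelling (a harmless transpose of $\tmmathbf{A}^y_\alpha$, immaterial later since only $\|\tmmathbf{A}_\alpha\|_2$ enters the estimates) address points the paper's terse computation glosses over.
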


\begin{proof}
  We apply Lemma (\ref{L: weak identity formula}) to $R^{2}_{\alpha} f$
  instead of $f$ and we are left with integrands of the form
  \begin{eqnarray*}
    \lefteqn{ \left( \widehat{\tmmathbf{\nabla}}_{z} P_{t} R^{2}_{\alpha} f,
    \widehat{\tmmathbf{\nabla}}_{z} P_{t} g \right)_{L^{2} ( \hat{T}
    \mathbbm{G} )}    }\\
    & = & ( ( - \Delta_{z} ) P_{t} R^{2}_{\alpha} f,P_{t} g
    )_{L^{2} ( \mathbbm{G} )}\\
    & = & \sum_{i} \alpha_{i}^{x}  ( ( - \Delta_{z} ) P_{t}
    R^{2}_{i} f,P_{t} g ) + \sum_{j,k} \alpha_{j,k}^{y} ( ( - \Delta_{z} )
    P_{t} R_{j} R_{k} f,P_{t} g )\\
    & = & \sum_{i} \alpha_{i}^{x}  ( ( - \Delta_{z} ) P_{t}
    X_{i}^{-} ( - \Delta_{z} )^{-1/2} X_{i}^{+} ( - \Delta_{z} )^{-1/2}
    f,P_{t} g )\\
    &  & +  \sum_{j,k} \alpha_{j,k}^{y}  ( ( -
    \Delta_{z} ) P_{t} X_{j} ( - \Delta_{z} )^{-1/2} X_{k} ( - \Delta_{z}
    )^{-1/2} f,P_{t} g )\\
    & = & \frac{1}{2} \sum_{i} \sum_{\tau = \pm} \alpha_{i}^{x} 
    ( X_{i}^{\tau} P_{t} f,X_{i}^{\tau} P_{t} g )  + 
    \sum_{j,k} \alpha^{y}_{j,k}  ( X_{j} P_{t} f,X_{k} P_{t} g )\\
    & = & \left( \tmmathbf{A}_{\alpha} \widehat{\tmmathbf{\nabla}}_{z} P_{t}
    f, \widehat{\tmmathbf{\nabla}}_{z} P_{t} g \right)_{L^{2} ( \hat{T}
    \mathbbm{G} )}
  \end{eqnarray*}
  where we used successively the commutation properties of the Laplacian
  $\Delta_{z}$ with the vector fields and the commutation properties of the
  vector fields with $P_{t} =e^{t \Delta_{z}}$. This yields the desired
  result.
\end{proof}

\section{Stochastic integrals and martingale transforms}\label{S: stochastic
integrals and martingale transforms}

In what follows, we assume that we have a complete probability space $( \Omega
, \mathcal{F} ,\mathbbm{P} )$ with a c{\`a}dl{\`a}g (i.e. right continuous
left limit) filtration $( \mathcal{F}_{t} )_{t \geqslant 0}$ of
sub-$\sigma$--algebras of $\mathcal{F}$. We assume as usual that
$\mathcal{F}_{0}$ contains all events of probability zero. All random walks
and martingales are adapted to this filtration.

We define below a continuous-time random process $\mathcal{Z}$ with values in
$\mathbbm{G}$, $\mathcal{Z}_{t} \assign ( \mathcal{X}_{t} , \mathcal{Y}_{t} )
\in \mathbbm{G}_{x} \times \mathbbm{G}_{y}$, having infinitesimal generator
$L= \Delta_{z}$. The pure-jump component $\mathcal{X}_{t}$ is a compound
Poisson jump process on the discrete set $\mathbbm{G}_{x}$, wheras the
continuous component $\mathcal{Y}_{t}$ is a standard brownian motion on the
manifold $\mathbbm{G}_{y}$. Then, It{\^o}'s formula ensures that semi-discrete
``harmonic'' functions $f:\mathbbm{R}^{+} \times \mathbbm{G} \rightarrow
\mathbbm{C}$ solving the backward heat equation $( \partial_{t} + \Delta_{z} )
f=0$ give rise to martingales $M_{t}^{f} \assign f ( t, \mathcal{Z}_{t} )$ for
which we define a class of martingale transforms.

\paragraph{Stochastic integrals on Riemannian manifolds and It{\^o}
integral}Following {\tmname{Emery}} {\cite{Eme2000a}}{\cite{Eme2005a}}, see
also {\tmname{Arcozzi}} {\cite{Arc1995a}}{\cite{Arc1998a}}, we define the
Brownian motion $\mathcal{Y}_{t}$ on $\mathbbm{G}_{y}$, a compact Riemannian
manifold, as the process $\mathcal{Y}_{t} \  : \  \Omega
\rightarrow ( 0,T ) \times \mathbbm{G}_{y}$ such that for all smooth functions
$f:\mathbbm{G}_{y} \rightarrow \mathbbm{R}$, the quantity
\begin{equation}
  f ( \mathcal{Y}_{t} ) -f ( \mathcal{Y}_{0} ) - \frac{1}{2} \int_{0}^{t} (
  \Delta_{y} f ) ( \mathcal{Y}_{s} ) \; \mathd s=: ( I_{\mathd_{y} f} )_{t}
  \label{eq: Ito for continuous processes}
\end{equation}
is an $\mathbbm{R}$--valued continuous martingale. For any adapted continuous
process $\Psi$ with values in the cotangent space $T^{\ast} \mathbbm{G}_{y}$
of $\mathbbm{G}_{y}$, if $\Psi_{t} ( \omega ) \in T^{\ast}_{Y_{t} ( \omega )}
\mathbbm{G}_{y}$ for all $t \geqslant 0$ and $\omega \in \Omega$, then one can
define the {\tmem{continuous}} It{\^o} integral $I_{\Psi}$ of $\Psi$ as
\[ ( I_{\Psi} )_{t} \assign \int_{0}^{t} \langle \Psi_{s} , \mathd
   \mathcal{Y}_{s} \rangle \]
so that in particular
\[ ( I_{\mathd_{y} f} )_{t} \assign \int_{0}^{t} \langle \mathd_{y} f (
   \mathcal{Y}_{s} ) , \mathd \mathcal{Y}_{s} \rangle \]
The integrand above involves the $1$--form of $T_{y}^{\ast} \mathbbm{G}_{y}$
\[ \mathd_{y} f ( y ) \assign \sum_{j} ( Y_{j} f ) ( y ) \ 
   Y_{j}^{\ast} . \]

\paragraph{A pure jump process on $\mathbbm{G}_{x}$}We will now define the
{\tmem{discrete}} $m$--dimensional process $\mathcal{X}_{t}$ on the
{\tmem{discrete}} abelian group $\mathbbm{G}_{x}$ as a generalized compound
Poisson process. In order to do this we need a number of independent variables
and processes:

First, for any given $1 \leqslant i \leqslant m$, let
$\mathcal{N}^{i}_{t}$ be a c{\`a}dl{\`a}g Poisson process of parameter
$\lambda$, that is
\[ \forall t, \hspace{1em} \mathbbm{P} ( \mathcal{N}^{i}_{t} =n ) = \frac{(
   \lambda  t )^{n}}{n!} e^{- \lambda t} . \]
The sequence of instants where the jumps of the $\mathcal{N}^{i}_{t}$ occur is
noted $( T^{i}_{k} )_{k \in \mathbbm{N}}$, with the convention $T^{i}_{0} =0$.

Second, we set
\[ \mathcal{N}_{t} = \sum_{i=1}^{m} \mathcal{N}_{t}^{i} \]
Almost surely, for any two distinct $i$ and $j$, we have $\{ T_{k}^{i} \}_{k
\in \mathbbm{N}} \cap \{ T_{k}^{j} \}_{k \in \mathbbm{N}} = \emptyset$. Let
therefore $\{ T_{k} \}_{k \in \mathbbm{N}} = \cup_{i=1}^{m} \{ T_{k}^{i} \}_{k
\in \mathbbm{N}}$ be the ordered sequence of instants of jumps of
$\mathcal{N}_{t}$ and let $i_{t} \equiv i_{t} ( \omega )$ be the index of the
coordinate where the jump occurs at time $t$. We set $i_{t} =0$ if no jump
occurs. The random variables $i_{t}$ are measurable: $i_{t} = (
\mathcal{N}^{1}_{t} -\mathcal{N}_{t^{-}}^{1} ,\mathcal{N}^{2}_{t}
-\mathcal{N}_{t^{-}}^{2} , \ldots ,\mathcal{N}^{m}_{t}
-\mathcal{N}_{t^{-}}^{m} ) \cdot ( 1,2, \ldots ,m )$. In differential form,
\[ \mathd \mathcal{N}_{t} = \sum_{i=1}^{m} \mathd \mathcal{N}^{i}_{t} = \mathd
   \mathcal{N}^{i_{t}}_{t} . \]

Third, we denote by $( \tau_{k} )_{k \in \mathbbm{N}}$ a sequence of
independent Bernoulli variables
\[ \forall k, \hspace{1em} \mathbbm{P} ( \tau_{k} =1 ) =\mathbbm{P} ( \tau_{k}
   =-1 ) =1/2. \]

Finally, the random walk $\mathcal{X}_{t}$ started at $\mathcal{X}_{0} \in
\mathbbm{G}_{x}$ is the c{\`a}dl{\`a}g compound Poisson process (see e.g.
{\tmname{Protter}} {\cite{Pro2005a}}, {\tmname{Privault}}
{\cite{Pri2009a,Pri2014a}}) defined as
\[ \mathcal{X}_{t} \assign \mathcal{X}_{0} + \sum_{k=1}^{\mathcal{N}_{t}}
   G_{i_{k}}^{\tau_{k}} , \]
where $G_{i}^{\tau} = ( 0, \ldots ,0, \tau g_{i} ,0, \ldots ,0 )$ when $i_{}
\neq 0$ \ and $( 0, \ldots ,0 )$ when $i_{} =0$.

\paragraph{Stochastic integrals on discrete groups}We recall for the
convenience of the reader the derivation of stochastic integrals for jump
processes. We will emphasize the fact that the corresponding It{\^o}'s formula
involves the action of a discrete $1$--form written in a well-chosen local
coordinate system of the discrete {\tmem{augmented}} cotangent plane (see
details below). Let $1 \leqslant k \leqslant \mathcal{N}_{t}$ and let $( T_{k}
,i_{k} , \tau_{k} )$ be respectively the instant, the axis and the direction
of the $k$--th jump. We set $T_{0} =0$. Let $f \assign f ( t,x )$, $t \in
\mathbbm{R}^{+}$, $x \in \mathbbm{G}_{x}$ a function defined on
$\mathbbm{R}^{+} \times \mathbbm{G}_{x}$. Then
\begin{eqnarray*}
  \lefteqn{f ( t, \mathcal{X}_{t} ) - f ( 0,\mathcal{X}_{0} )}\\
  & = & f ( t, \mathcal{X}_{t} ) -f \left(
  T_{\mathcal{N}_{t}} , \mathcal{X}_{T_{\mathcal{N}_{t}}} \right) +
  \sum_{k=1}^{\mathcal{N}_{t}} \{ f ( T_{k} , \mathcal{X}_{T_{k}} ) -f (
  T_{k-1} , \mathcal{X}_{T_{k-1}} ) \} \\
  & = & f ( t, \mathcal{X}_{t} ) -f \left( T_{\mathcal{N}_{t}} ,
  \mathcal{X}_{T_{\mathcal{N}_{t}}} \right)\\
  &  & + \sum_{k=1}^{\mathcal{N}_{t}} \{ f ( T_{k} , \mathcal{X}_{T_{k}} ) -f
  ( T_{k^{}} , \mathcal{X}_{T_{k-}} ) +f ( T_{k} , \mathcal{X}_{T_{k-}} ) -f (
  T_{k-1} , \mathcal{X}_{T_{k-1}} ) \} \\
  & = & \int_{T_{\mathcal{N}_{t}}}^{t} ( \partial_{t} f ) ( s,
  \mathcal{X}_{s} ) \mathd s\\
  &  & + \sum_{k=1}^{\mathcal{N}_{t}} \left\{ f ( T_{k} , \mathcal{X}_{T_{k}}
  ) -f ( T_{k} , \mathcal{X}_{T_{k-}} ) + \int_{T_{k-1}}^{T_{k}} (
  \partial_{t} f ) ( s, \mathcal{X}_{s} ) \mathd s \right\}\\
  & = & \int_{0}^{t} ( \partial_{t} f ) ( s, \mathcal{X}_{s} ) \mathd s+
  \int_{0}^{t} ( f ( s, \mathcal{X}_{s} ) -f ( s, \mathcal{X}_{s_{-}} ) ) \;
  \mathd \mathcal{N}_{s} \\
  & = & \int_{0}^{t} ( \partial_{t} f ) ( s, \mathcal{X}_{s} ) \mathd s+
  \sum_{i=1}^{m} \int_{0}^{t} ( f ( s, \mathcal{X}_{s} ) -f ( s,
  \mathcal{X}_{s_{-}} ) ) \; \mathd \mathcal{N}^{i}_{s} .
\end{eqnarray*}
At an instant $s$, the integrand in the last term writes as
\begin{eqnarray*}
  \lefteqn{( f ( s, \mathcal{X}_{s} ) -f ( s, \mathcal{X}_{s_{-}} ) )
  \mathd \mathcal{N}^{i}_{s}}\\
   & = & \left( f \left( s, \mathcal{X}_{s_{-}}
  +G_{i}^{\tau_{\mathcal{N}_{s}}} \right) -f ( s, \mathcal{X}_{s_{-}} )
  \right)  \mathd \mathcal{N}^{i}_{s}\\
  & = & \left( X_{i}^{\tau_{\mathcal{N}_{s}}} f \right) ( s,
  \mathcal{X}_{s_{-}} ) \tau_{\mathcal{N}_{s}} \mathd
  \mathcal{N}^{i}_{s}\\
  & = & \frac{1}{2} \left\{ ( X_{i}^{2} f ) ( s, \mathcal{X}_{s_{-}} )
   + \tau_{\mathcal{N}_{s}}  ( X_{i}^{0}
  f ) ( s, \mathcal{X}_{s_{-}} ) \right\} \mathd
  \mathcal{N}^{i}_{s}
\end{eqnarray*}
where we introduced, for all $1 \leqslant i \leqslant m$,
\begin{eqnarray*}
  X_{i}^{0} & \assign & X_{i}^{+} +X_{i}^{-}\\
  X_{i}^{2} & \assign & X_{i}^{+} -X_{i}^{-} .
\end{eqnarray*}
Notice that, for any given $1 \leqslant i \leqslant m$, up to a normalisation
factor, the system of coordinate $( X_{i}^{2} ,X_{i}^{0} )$ is obtained thanks
to a {\tmem{rotation}} of $\pi /4$ of the canonical system of coordinate $(
X_{i}^{+} ,X_{i}^{-} )$. Finally,
\begin{eqnarray*}
 \lefteqn{ f ( t, \mathcal{X}_{t} ) - f ( 0, \mathcal{X}_{0} )}\\
  & = & \int_{0}^{t} ( \partial_{t} f ) ( s,
  \mathcal{X}_{s} ) \mathd s\\
  &&+ \frac{1}{2} \sum_{i=1}^{m} \int_{0}^{t} \left\{
  ( X_{i}^{2} f ) ( s, \mathcal{X}_{s_{-}} )  + 
  \tau_{\mathcal{N}_{s}}  ( X_{i}^{0} f ) ( s, \mathcal{X}_{s_{-}}
  ) \right\} \mathd \mathcal{N}^{i}_{s} \nonumber\\
  & = & \int_{0}^{t} \left\{ ( \partial_{t} f ) ( s, \mathcal{X}_{s} ) +
  \frac{\lambda}{2} ( \Delta_{x} f ) ( s, \mathcal{X}_{s} ) \right\} \mathd s+
  \nonumber\\
  &  & + \frac{1}{2} \sum_{i=1}^{m} \int_{0}^{t} ( X^{2}_{i} f ) ( s,
  \mathcal{X}_{s_{-}} )  \mathd ( \mathcal{N}^{i}_{s} - \lambda s
  ) + ( X^{0}_{i} f ) ( s, \mathcal{X}_{s_{-}} )  \mathd
  \mathcal{X}_{s}^{i} \nonumber\\
  &  & \tmop{having}   \tmop{set}   \mathd \mathcal{X}^{i}_{s} =
  \tau_{\mathcal{N}_{s}} \mathd \mathcal{N}^{i}_{s} \nonumber\\
  & = & \int_{0}^{t} \left\{ ( \partial_{t} f ) ( s, \mathcal{X}_{s} ) +
  \frac{\lambda}{2} ( \Delta_{x} f ) ( s, \mathcal{X}_{s} ) \right\} \mathd s+
  \int_{0}^{t} \left\langle \widehat{\tmmathbf{\mathd}} f ( s,
  \mathcal{X}_{s_{-}} ) , \mathd \widehat{\mathcal{W}}_{s} \right\rangle
  \nonumber\\
  & =: & \int_{0}^{t} \left\{ ( \partial_{t} f ) ( s, \mathcal{X}_{s} ) +
  \frac{\lambda}{2} ( \Delta_{x} f ) ( s, \mathcal{X}_{s} ) \right\} \mathd s+
  \left( I_{\widehat{\tmmathbf{\mathd}}_{x} f} \right)_{t}  . 
\end{eqnarray*}
It is easy to see that $  \mathd \mathcal{X} ^{i}_{s}$ is the stochastic
differential of a martingale. Here and in the sequel, we take $\lambda =2$.

\paragraph{Discrete It{\^o} integral}The stochastic integral above shows that
It{\^o} formula (\ref{eq: Ito for continuous processes}) for continuous
processes has a discrete counterpart involving stochastic integrals for jump
processes, namely we have the {\tmem{discrete}} It{\^o} integral
\begin{eqnarray*}
  \left( I_{\widehat{\tmmathbf{\mathd}}_{x} f} \right)_{t} & \assign &
  \frac{1}{2} \sum_{i=1}^{m} \int_{0}^{t} ( X^{2}_{i} f ) ( s,
  \mathcal{X}_{s_{-}} ) \  \mathd ( \mathcal{N}^{i}_{s} - \lambda s
  ) + ( X^{0}_{i} f ) ( s, \mathcal{X}_{s_{-}} ) \  \mathd
  \mathcal{X}_{s}
\end{eqnarray*}
This has a more intrinsic expression similar to the continuous It{\^o}
integral (\ref{eq: Ito for continuous processes}). If we regard the discrete
component $\mathbbm{G}_{x}$ as a ``discrete Riemannian'' manifold, then this
discrete It{\^o} integral involves discrete vectors (resp. $1$--forms) defined
on the {\tmem{augmented}} discrete tangent (resp. cotangent) space
$\hat{T}_{x} \mathbbm{G}_{x}$ (resp. $\hat{T}_{x}^{\ast} \mathbbm{G}_{x}$) of
dimension $2m$ defined as
\begin{eqnarray*}
  \hat{T}_{x} \mathbbm{G}_{x} & = & \tmop{span} \{ X_{1}^{+} ,X_{2}^{+} ,
  \ldots ,X_{1}^{-} ,X_{2}^{-} , \ldots \}\\
  & = & \tmop{span} \{ X_{1}^{2} ,X_{2}^{2} , \ldots ,X_{1}^{0} ,X_{2}^{0} ,
  \ldots \}\\
  \hat{T}_{x}^{\ast} \mathbbm{G}_{x} & = & \tmop{span} \{ ( X_{1}^{+} )^{\ast}
  , ( X_{2}^{+} )^{\ast} , \ldots , ( X_{1}^{-} )^{\ast} , ( X_{2}^{-}
  )^{\ast} , \ldots \}\\
  & = & \tmop{span} \{ ( X_{1}^{2} )^{\ast} , ( X_{2}^{2} )^{\ast} , \ldots ,
  ( X_{1}^{0} )^{\ast} , ( X_{2}^{0} )^{\ast} , \ldots \} .
\end{eqnarray*}
Let $\mathd \widehat{\mathcal{W}}_{s} \in \hat{T}_{\mathcal{X}_{s}}
\mathbbm{G}_{x}$ be the vector and $\widehat{\tmmathbf{\mathd}} f \in
\hat{T}_{\mathcal{X}_{s}}^{\ast} \mathbbm{G}_{x}$ be the $1$--form
respectively defined as:
\begin{eqnarray*}
  \mathd \widehat{\mathcal{W}}_{s} & = & \mathd ( \mathcal{N}^{1}_{s} -
  \lambda s )  X_{1}^{2} + \ldots + \mathd ( \mathcal{N}^{m}_{s} -
  \lambda s ) X_{m}^{2} + \mathd
  \mathcal{X}_{s}^{1} X_{1}^{0} + \ldots + \mathd
  \mathcal{X}_{s}^{m}  X_{m}^{0}\\
  \widehat{\tmmathbf{\mathd}}_{x} f & = & X_{1}^{2} f  ( X_{1}^{2}
  )^{\ast} + \ldots +X_{m}^{2} f ( X_{m}^{2} )^{\ast} +X_{1}^{0}
  f  ( X_{1}^{0} )^{\ast} + \ldots +X_{m}^{0} f  (
  X_{m}^{0} )^{\ast}
\end{eqnarray*}
We have with these notations
\begin{eqnarray*}
  \left( I_{\widehat{\tmmathbf{\mathd^{}}}_{x} f} \right)_{t} & \assign &
  \left\langle \widehat{\tmmathbf{\mathd}}_{x} f, \mathd
  \widehat{\mathcal{W}}_{s} \right\rangle_{\hat{T}_{x}^{\ast} \mathbbm{G}_{x}
  \times \hat{T}_{x} \mathbbm{G}_{x}}
\end{eqnarray*}
where the factor $1/2$ is included in the pairing $\langle \cdot , \cdot
\rangle_{\hat{T}_{x}^{\ast} \mathbbm{G}_{x} \times \hat{T}_{x}
\mathbbm{G}_{x}}$.

\paragraph{Semi-discrete stochastic integrals}Let finally $\mathcal{Z}_{t} = (
\mathcal{X}_{t} , \mathcal{Y}_{t} )$ be a semi-discrete random walk on the
cartesian product $\mathbbm{G}=\mathbbm{G}_{x} \times \mathbbm{G}_{y}$, where
$\mathcal{X}_{t}$ is the random walk above defined on $\mathbbm{G}_{x}$ with
generator $\Delta_{x}$ and where $\mathcal{Y}_{t}$ is the Brownian motion
defined on $\mathbbm{G}_{y}$ with generator $\Delta_{y}$. For $f \assign f (
t,z ) =f ( t,x,y )$ defined from $\mathbbm{R}^{+} \times \mathbbm{G}$ onto
$\mathbbm{C}$, we have easily the stochastic integral involving both discrete
and continuous parts:
\begin{eqnarray*}
  f ( t, \mathcal{Z}_{t} ) & = & \int_{0}^{t} \{ ( \partial_{t} f ) (
  s,\mathcal{Z}_{s} ) + ( \Delta_{z} f ) ( s,\mathcal{Z}_{s} ) \} \; \mathd s+
  \left( I_{\widehat{\tmmathbf{\mathd}}_{z} f} \right)_{t}
\end{eqnarray*}
where the {\tmem{semi-discrete}} It{\^o} integral writes as
\begin{eqnarray*}
  \left( I_{\widehat{\tmmathbf{\mathd}}_{z} f} \right)_{t} & \assign & \left(
  I_{\widehat{\tmmathbf{\mathd}}_{x} f} \right)_{t} + \left(
  I_{\tmmathbf{\mathd}_{y} f} \right)_{t}\\
  & \assign & \int_{0}^{t} \left\langle \widehat{\tmmathbf{\mathd}}_{x} f (
  s, \mathcal{Z}_{s_{-}} ) , \mathd \widehat{\mathcal{W}}_{s}
  \right\rangle_{\hat{T}_{\mathcal{X}_{s}}^{\ast} \mathbbm{G}_{x} \times
  \hat{T}_{\mathcal{X}_{s}} \mathbbm{G}_{x}} \\
  &&+ \int_{0}^{t} \left\langle
  \tmmathbf{\mathd}_{y} f ( s, \mathcal{Z}_{s_{-}} ) , \mathd \mathcal{Y}_{s}
  \right\rangle_{\hat{T}_{\mathcal{Y}_{s}}^{\ast} \mathbbm{G}_{y} \times
  \hat{T}_{\mathcal{Y}_{s}} \mathbbm{G}_{y}}
\end{eqnarray*}

\subsection{Martingale transforms and quadratic covariations}

\mbox{}\\

\paragraph{Martingale transforms} We are interested in martingale transforms
allowing us to represent second order Riesz transforms. Let $f ( t,z )$ be a
solution to the heat equation $\partial_{t} - \Delta_{z} =0$. Fix $T>0$ and
$\mathcal{Z}_{0} \in \mathbbm{G}$. Then define
\[ \forall 0 \leqslant t \leqslant T, \hspace{1em} M_{t}^{f,T,\mathcal{Z}_{0}}
   =f ( T-t,\mathcal{Z}_{t} ) . \]
This is a martingale since $f ( T-t )$ solves the backward heat equation
$\partial_{t} + \Delta_{z} =0$, and we have in terms of stochastic integrals
\[ M_{t}^{f,T,\mathcal{Z}_{0}} =f ( T-t,\mathcal{Z}_{t} ) =f (
   T,\mathcal{Z}_{0} ) + \int_{0}^{t} \left\langle
   \widehat{\tmmathbf{\mathd}}_{z} f ( T-s,\mathcal{Z}_{s_{-}} ) , \mathd
   \mathcal{Z}_{s} \right\rangle \]
Given $\tmmathbf{A}_{\alpha}$ the $\mathbbm{C}^{( 2m+n ) \times ( 2m+n )}$
matrix defined earlier, we note $M_{t}^{\alpha ,f,T,\mathcal{Z}_{0}}$ the
martingale transform $\tmmathbf{A}_{\alpha} \ast M_{t}^{f,T,\mathcal{Z}_{0}}$
defined as
\begin{eqnarray*}
  M_{t}^{\alpha ,f,T,\mathcal{Z}_{0}} & \assign & f ( T,\mathcal{Z}_{0} ) +
  \int_{0}^{t} \left( \tmmathbf{A}_{\alpha} \widehat{\tmmathbf{\nabla}}_{z} f
  ( s,\mathcal{Z}_{s_{-}} ) , \mathd \mathcal{Z}_{s} \right)\\
  & = & f ( T,\mathcal{Z}_{0} ) + \int_{0}^{t} \left\langle
  \widehat{\tmmathbf{\mathd}}_{z} f ( T-s,\mathcal{Z}_{s_{-}} )
  \tmmathbf{A}_{\alpha}^{\ast} , \mathd \mathcal{Z}_{s} \right\rangle
\end{eqnarray*}
where the first integral involves the $L^{2}$ scalar product on $\hat{T}_{z}
\mathbbm{G} \times \hat{T}_{z} \mathbbm{G}$ and the second integral involves
the duality $\hat{T}_{z}^{\ast} \mathbbm{G} \times \hat{T}_{z} \mathbbm{G}$.
In differential form:
\begin{eqnarray*}
  \lefteqn{\mathd M_{t}^{\alpha ,f,T,\mathcal{Z}_{0}} }\\
  & = & \left(
  \tmmathbf{A}_{\alpha} \widehat{\tmmathbf{\nabla}}_{z} f (
  s,\mathcal{Z}_{s_{-}} ) , \mathd \mathcal{Z}_{s} \right)\\
  & = & \sum_{i=1}^{m} \alpha_{i}^{x}  \left\{ ( X^{2}_{i} f ) (
  T-t,\mathcal{Z}_{t_{-}} ) \; \mathd ( \mathcal{N}^{i}_{t} - \lambda t ) + (
  X^{0}_{i} f ) ( t,\mathcal{Z}_{t_{-}} ) \; \mathd \mathcal{X}_{t}^{i}
  \right\}\\
  &  & + \sum_{j=1}^{n} \alpha_{j,k}^{y} \; ( X_{j} f ) (
  T-t,\mathcal{Z}_{t_{-}} ) \; \mathd \mathcal{Y}_{t}^{k}
\end{eqnarray*}

\paragraph{Quadratic covariation and subordination}We have the quadratic
covariations (see {\tmname{Protter }}{\cite{Pro2005a}},
{\tmname{Dellacherie}}--{\tmname{Meyer}} {\cite{DelMey1982a}}, or
{\tmname{Privault}} {\cite{Pri2009a,Pri2014a}})
\begin{eqnarray*}
  \mathd [ \mathcal{N}^{i} - \lambda t, \mathcal{N}^{i} - \lambda t ]_{t} & =
  & \mathd \mathcal{N}_{t}^{i}\\
  \mathd [ \mathcal{N}^{i} - \lambda t, \mathcal{X}^{i} ]_{t} & = &
  \tau_{\mathcal{N}_{t}} \  \mathd \mathcal{N}_{t}^{i}\\
  \mathd [ \mathcal{X}^{i} , \mathcal{X}^{i} ]_{t} & = & \mathd
  \mathcal{N}_{t}^{i}\\
  \mathd [ \mathcal{Y}^{j} , \mathcal{Y}^{j} ]_{t} & = & \mathd t,
\end{eqnarray*}
the other quadratic covariations being zero. For any two martingales
$M_{t}^{f}$ and $M_{t}^{g}$ defined as above thanks to their respective heat
extensions $P_{t} f$ et $P_{t} g$, we have the quadratic covariations
\begin{eqnarray*}
  \lefteqn{\mathd [ M^{f} ,M^{g} ]_{t} }\\
  & = & \sum_{i=1}^{m} ( X^{2}_{i} f ) ( T-t,
  \mathcal{Z}_{t_{-}} ) \; ( X^{2}_{i} g ) ( T-t, \mathcal{Z}_{t_{-}} )
   \mathd [ \mathcal{N}^{i} - \lambda t, \mathcal{N}^{i} - \lambda
  t ]_{t}\\
  &  & + \sum_{i=1}^{m} ( X^{0}_{i} f ) ( T-t,
  \mathcal{Z}_{t_{-}} ) \; ( X^{0}_{i} g ) ( T-t, \mathcal{Z}_{t_{-}} )
  \hspace{1em} \mathd [ \mathcal{X}^{i} , \mathcal{X}^{i}_{} ]_{t}\\
  &  &  +  \sum_{i=1}^{m} ( X^{2}_{i} f ) ( T-t,
  \mathcal{Z}_{t_{-}} ) \; ( X^{0}_{i} g ) ( T-t, \mathcal{Z}_{t_{-}} )
  \mathd [ \mathcal{N}^{i} - \lambda t, \mathcal{X}^{i}_{}
  ]_{t}\\
  &  & +  \sum_{i=1}^{m} ( X^{0}_{i} f ) ( T-t,
  \mathcal{Z}_{t_{-}} ) \; ( X^{2}_{i} g ) ( T-t, \mathcal{Z}_{t_{-}} )
   \mathd [ \mathcal{X}^{i} , \mathcal{N}^{i}_{} - \lambda t
  ]_{t}\\
  &  & +  \sum_{j=1}^{n} ( X_{j} f ) ( T-t,
  \mathcal{Z}_{t_{-}} ) \; ( X_{j} g ) ( T-t, \mathcal{Z}_{t_{-}} )
   \mathd [ \mathcal{Y}^{j} , \mathcal{Y}^{j} ]_{t}\\
  & = & \sum_{i=1}^{m} \sum_{\tau = \pm} ( X^{\tau}_{i} f ) \; (
  X^{\tau}_{i} g ) ( T-t, \mathcal{Z}_{t_{-}} ) \mathbbm{1} (
  \tau_{\mathcal{N}_{t}} = \tau 1 )   \mathd \mathcal{N}^{i}_{t}\\
  &  &  + \left( \tmmathbf{\nabla}_{y} f, \tmmathbf{\nabla}_{y} g
  \right) ( T-t, \mathcal{Z}_{t_{-}} )  \mathd t
\end{eqnarray*}
so that finally
\begin{eqnarray} \label{eq:quadratic_covariation_of_f_and_g}
  \mathd [ M^{f} ,M^{g} ]_{t} &=& \sum_{i=1}^{m} \sum_{\tau = \pm} 
  ( X^{\tau}_{i} f ) \; ( X^{\tau}_{i} g ) ( T-t, \mathcal{Z}_{t_{-}} )
   \mathbbm{1} ( \tau_{\mathcal{N}_{t}} = \tau 1 )   \mathd
  \mathcal{N}^{i}_{t} \\
  &&+ \left( \tmmathbf{\nabla}_{y} f, \tmmathbf{\nabla}_{y}
  g \right) ( T-t, \mathcal{Z}_{t_{-}} )  \mathd t \nonumber
\end{eqnarray}

\paragraph{Differential subordination}Following {\tmname{Wang}}
{\cite{Wan1995a}}, given two adapted c{\`a}dl{\`a}g Hilbert space valued
martingales $X_{t}$ and $Y_{t}$, we say that $Y_{t}$ is differentially
subordinate by quadratic variation to $X_{t}$ if $| Y_{0} |_{\mathbbm{H}}
\leqslant | X_{0} |_{\mathbbm{H}}$ and $[ Y,Y ]_{t} - [ X,X ]_{t}$ is
nondecreasing nonnegative for all $t$. In our case, we observe that
\begin{eqnarray*}
  \mathd [ M^{\alpha ,f} ,M^{\alpha ,f} ]_{t} & = & \sum_{i=1}^{m} |
  \alpha_{i}^{x} |^{2} \; \left\{ ( X^{+}_{i} f )^{2} ( T-t,
  \mathcal{Z}_{t_{-}} )  \mathbbm{1} ( \tau_{\mathcal{N}_{t}} =1 )
  \right.\\
  &  & +  \left. ( X^{-}_{i} f )^{2} ( T-t,
  \mathcal{Z}_{t_{-}} )  \mathbbm{1} ( \tau_{\mathcal{N}_{t}} =-1
  ) \right\}  \mathd \mathcal{N}^{i}_{t}\\
  &  &  + \left( \tmmathbf{A}^{y}_{\alpha}  
  \tmmathbf{\nabla}_{y} f,\tmmathbf{A}^{y}_{\alpha}   \tmmathbf{\nabla}_{y} f
  \right) ( T-t, \mathcal{Z}_{t_{-}} ) \mathd t.
\end{eqnarray*}
Hence
\begin{equation}
  \mathd [ M^{\alpha ,f} ,M^{\alpha ,f} ]_{t} \leqslant \|
  \tmmathbf{A}_{\alpha} \|^{2}_{2}  \  \mathd [ M^{f} ,M^{f} ]_{t}
  \label{eq: differential subordination} .
\end{equation}
This means that $M^{\alpha ,f}_{t}$ is differentially subordinate to $\|
\tmmathbf{A}_{\alpha} \|_{2} M_{t}^{f}$.

\section{Proofs of the main results}\label{S: proofs of the main results}

\subsection{Proof of Theorem \ref{T: a la Gundy-Varopoulos}}To establish the
estimate itself, we adapt the well-known connection between martingale
transforms and classical singular integral operators, through the use of
projection operators. We refer to {\tmname{Gundy}}--{\tmname{Varopoulos}}
{\cite{GunVar1979}} as well as {\tmname{Ba{\~n}uelos}} {\cite{Ban1986a}} and
{\tmname{Ba{\~n}uelos}}--{\tmname{Baudoin}} {\cite{BanBau2013}}. Following the
same strategy, the random trajectories we use $( \mathcal{B}_{t} )_{-T \leqslant t
\leqslant 0}$ defined on the band $[ -T,0 ] \times \mathbbm{G}$ by
\[ \mathcal{B}_{t} \assign ( -t, \mathcal{Z}_{t} ) , \hspace{1em}
   \mathcal{B}_{-T} = ( T, \mathcal{Z}_{-T} ) , \hspace{1em} -T \leqslant t
   \leqslant 0, \hspace{1em} \mathcal{Z}_{-T} \in \mathbbm{G} \]
are replaced by random trajectories $( \mathcal{B}_{t} )_{- \infty \leqslant t
\leqslant 0}$ defined on the upper half space $\mathbbm{R}^{+} \times
\mathbbm{G}$ after exhaustion of the upper half space.
\[ \mathcal{B}_{t} \assign ( -t, \mathcal{Z}_{t} ) , \hspace{1em}
   \mathcal{B}_{- \infty} = ( \infty , \mathcal{Z}_{- \infty} ) , \hspace{1em}
   - \infty \leqslant t \leqslant 0, \hspace{1em} \mathcal{Z}_{- \infty} \in
   \mathbbm{G}. \]
The latter are therefore trajectories starting at time $t=- \infty$ from a
chosen point $\mathcal{Z}_{- \infty} \in \mathbbm{G}$, and stopping at time
$t=0$, when hitting the bottom boundary $\mathbbm{G}$ of the upper half space.
If $f ( t ) =P_{t} f$ is as in the previous section, then $M_{t}^{f} =f (
\mathcal{B}_{t} )$ for $- \infty \leqslant t \leqslant 0$ is a martingale and
$M_{t}^{\alpha ,f}$ its martingale transform as defined previously. Let the
projection operator $\mathcal{T}^{\alpha}$ be defined as the following
conditional expectation of the martingale transform $M^{\alpha ,f}_{t}$ of the
stochastic integral $M_{t}^{f}$:
\[ \forall z \in \mathbbm{G}, \hspace{1em} ( \mathcal{T}^{\alpha} f ) ( z )
   \assign \mathbbm{E} \left( M_{0}^{\alpha ,f} \hspace{.5em} \vert
   \hspace{.5em} \mathcal{Z}_{0} =z \right) . \]
Following {\tmname{Gundy}}-{\tmname{Varopoulos}} {\cite{GunVar1979}} (see also
{\cite{BanBau2013}}) this operator is the second order Riesz transform we are
interested in. Indeed, recalling the expression (\ref{eq:quadratic_covariation_of_f_and_g}) 
of the quadratic covariations of two martingale
increments, it is not difficult to calculate

{\hspace*{\fill}}
\begin{eqnarray*}
  \forall g, \hspace{1em} ( \mathcal{T}^{\alpha} f,g ) & = &
  \int_{\mathbbm{G}} ( \mathcal{T}^{\alpha} f ) ( z ) g ( z ) \; \mathd
  \mu_{z} ( z )\\
  & = & 2 \int^{\infty}_{0} \left( \tmmathbf{A}_{\alpha}
  \widehat{\tmmathbf{\nabla}}_{z} P_{t} f, \widehat{\tmmathbf{\nabla}}_{z}
  P_{t} g \right)_{L^{2} ( \hat{T} \mathbbm{G} )} \; \mathd  t.
\end{eqnarray*}
This means thanks to Lemma \ref{L: representation formula} that
$\mathcal{T}^{\alpha} =R_{\alpha}^{2}$. This concludes the proof of Theorem
\ref{T: a la Gundy-Varopoulos}.{\hfill}$\Box$

\subsection{Proof of Theorem \ref{T: p minus 1 estimate}}Recall that the
subordination estimate (\ref{eq: differential subordination}) shows that the
martingale transform $Y_{t} \assign M_{t}^{\alpha}$ is differentially
subordinate to the martingale $X_{t} \assign \| \tmmathbf{A}_{\alpha} \|_{2}
\hspace{.1em} M_{t}^{f}$. Following this result of {\tmname{Wang}}
{\cite{Wan1995a}}:

\begin{theorem}
  \label{T: Wang's result}{\dueto{Wang, 1995}}Let $X_{t}$ and $Y_{t}$ be two
  adapted c{\`a}dl{\`a}g Hilbert--valued martingales such that $Y_{t}$ is
  differentially subordinate by quadratic covariation to $X_{t}$. For $1<p<
  \infty$,
  \[ \| Y_{t} \|_{p} \leqslant ( p^{\ast} -1 ) \| X_{t} \|_{p} \]
  and the constant $p^{\ast} -1$ is best possible. Strict inequality holds
  when $0< \| X \|_{p} < \infty$ and $p \neq 2$,
\end{theorem}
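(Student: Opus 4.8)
The plan is to prove this by Burkholder's special--function method, which reduces the $L^{p}$ inequality to the existence of a single function on $\mathbbm{H}\times\mathbbm{H}$ enjoying two structural properties. First I would introduce Burkholder's function $U=U_{p}:\mathbbm{H}\times\mathbbm{H}\to\mathbbm{R}$, which depends only on the Hilbert norms $|x|$ and $|y|$ and is given by
\[ U(x,y)=p\left(1-\tfrac{1}{p^{\ast}}\right)^{p-1}\left(|y|-(p^{\ast}-1)|x|\right)\left(|x|+|y|\right)^{p-1}, \]
and set $V(x,y):=|y|^{p}-(p^{\ast}-1)^{p}|x|^{p}$. The two properties I would establish are: (i) the \emph{majorization} $V(x,y)\leqslant U(x,y)$ for all $x,y$, together with $U(x,y)\leqslant 0$ whenever $|y|\leqslant|x|$ (which holds because $p^{\ast}-1\geqslant 1$); and (ii) the \emph{concavity along the subordination cone}: for all $x,y,h,k\in\mathbbm{H}$ with $|k|\leqslant|h|$,
\[ U(x+h,y+k)\leqslant U(x,y)+\langle\nabla_{x}U(x,y),h\rangle+\langle\nabla_{y}U(x,y),k\rangle. \]
Because $U$ is radial in each variable, the verification of (i) and (ii) reduces to the scalar computation; property (i) is a direct pointwise check, while property (ii) is the heart of the matter and is the form, adapted to discontinuous paths, isolated by Wang.

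Next I would run the supermartingale argument via It\^o's formula for semimartingales with jumps. Applied to $t\mapsto U(X_{t},Y_{t})$ it yields a local--martingale term $\int_{0}^{t}\langle\nabla U(X_{s-},Y_{s-}),\mathd(X,Y)_{s}\rangle$, a continuous drift governed by the Hessian of $U$ integrated against the continuous quadratic covariation $\mathd[(X,Y)^{c}]$, and a pure--jump remainder $\sum_{s\leqslant t}\big[U(X_{s},Y_{s})-U(X_{s-},Y_{s-})-\langle\nabla U(X_{s-},Y_{s-}),\Delta(X,Y)_{s}\rangle\big]$. The subordination hypothesis $\mathd([X,X]-[Y,Y])\geqslant 0$ splits into a pointwise jump bound $|\Delta Y_{s}|\leqslant|\Delta X_{s}|$ (nonnegative jumps of a nondecreasing process) and a measure bound $\mathd([X^{c}]-[Y^{c}])\geqslant 0$ on the continuous parts. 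Feeding the jump bound into property (ii) makes every jump term nonpositive, while feeding the continuous bound into the infinitesimal (Hessian) form of (ii) makes the drift nonpositive; after a standard localization to secure integrability, $U(X_{t},Y_{t})$ is therefore a supermartingale.

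The bound then follows from the chain
\[ \mathbbm{E}|Y_{t}|^{p}-(p^{\ast}-1)^{p}\,\mathbbm{E}|X_{t}|^{p}\leqslant\mathbbm{E}\,V(X_{t},Y_{t})\leqslant\mathbbm{E}\,U(X_{t},Y_{t})\leqslant\mathbbm{E}\,U(X_{0},Y_{0})\leqslant 0, \]
whose steps are, respectively, the definition of $V$, the majorization in (i), the supermartingale property, and the sign part of (i) together with $|Y_{0}|\leqslant|X_{0}|$. This gives $\|Y_{t}\|_{p}\leqslant(p^{\ast}-1)\|X_{t}\|_{p}$. I expect the jump estimate --- verifying that the \emph{finite}--difference inequality (ii) holds on the entire cone $|k|\leqslant|h|$, and not merely infinitesimally --- to be the main obstacle, since this is exactly where the c\`adl\`ag setting departs from Burkholder's original continuous and discrete arguments and where Wang's refinement enters.

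For sharpness and strictness I would argue as follows. The constant $p^{\ast}-1$ cannot be lowered because the conclusion already specializes to Burkholder's sharp martingale--transform inequality, for which simple (discrete or Brownian) martingale pairs push the ratio $\|Y\|_{p}/\|X\|_{p}$ arbitrarily close to $p^{\ast}-1$. For the strict inequality when $p\neq 2$ and $0<\|X\|_{p}<\infty$, I would trace the equality cases in the displayed chain: equality would force $V(X_{t},Y_{t})=U(X_{t},Y_{t})$ almost surely and force the supermartingale $U(X_{t},Y_{t})$ to be constant in mean with value $0$ at time $0$. Analyzing the equality set of Burkholder's function shows these constraints are incompatible for $p\neq 2$ with $0<\|X\|_{p}<\infty$, so the inequality is strict.
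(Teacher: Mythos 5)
Your proposal cannot be checked against an internal argument, because the paper does not prove this statement: Theorem \ref{T: Wang's result} is imported verbatim from Wang \cite{Wan1995a} and used as a black box (it feeds the subordination estimate (\ref{eq: differential subordination}) into Lemma \ref{L: subordination of the martingale transform}). Measured instead against Wang's actual proof, your outline is the right one and essentially reproduces it: Burkholder's special function $U_{p}$ with the normalizing constant $p ( 1-1/p^{\ast} )^{p-1}$, the majorization $V \leqslant U$, nonpositivity of $U$ on the cone $\vert y \vert \leqslant \vert x \vert$, the finite-difference concavity along $\vert k \vert \leqslant \vert h \vert$, and the supermartingale property of $U ( X_{t} ,Y_{t} )$ obtained from It\^o's formula for c\`adl\`ag semimartingales, with the pointwise jump bound $\vert \Delta Y_{s} \vert \leqslant \vert \Delta X_{s} \vert$ correctly extracted from the monotonicity of $[ X,X ] - [ Y,Y ]$. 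Two technical points would need attention in a complete write-up. First, $U_{p}$ is not $C^{2}$ (it is only $C^{1}$ across the sets $x=0$ and $y=0$), so It\^o's formula cannot be applied to it directly; one must mollify $U$ --- properties (i) and (ii) survive convolution with a radial kernel --- or, as Wang does, pass through approximation arguments. Second, for the continuous drift the scalar directional concavity is not quite sufficient as stated: since the continuous quadratic covariation is a matrix-valued measure, one needs the Hessian estimate in the factorized form $\langle D^{2} U ( x,y ) ( h,k ) , ( h,k ) \rangle \leqslant c ( x,y ) ( \vert k \vert^{2} - \vert h \vert^{2} )$ with $c \geqslant 0$, which together with the Kunita--Watanabe inequality and $\mathd [ Y^{c} ,Y^{c} ] \leqslant \mathd [ X^{c} ,X^{c} ]$ yields a nonpositive drift. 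Your sharpness argument (specialization to Burkholder's discrete martingale transforms \cite{Bur1984a}) is correct; the strictness assertion for $p \neq 2$ and $0 \mbox{\textless} \| X \|_{p} \mbox{\textless} \infty$ is genuinely more delicate than ``trace the equality cases'' suggests and occupies a separate argument in \cite{Wan1995a}, but nothing in your sketch points in a wrong direction.
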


we get immediately

\begin{lemma}
  \label{L: subordination of the martingale transform}Let $M_{t}^{f}$ and
  $M_{t}^{\alpha ,f}$ as defined above. We have
  \[ \forall t, \hspace{1em} \| M_{t}^{\alpha ,f} \|_{p} \leqslant \|
     \tmmathbf{A}_{\alpha} \|_{2} \hspace{.1em} ( p^{\ast} -1 ) \hspace{.1em} \|
     M_{t}^{f} \|_{p} . \]
\end{lemma}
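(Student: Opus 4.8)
The plan is to read the estimate off Wang's theorem (Theorem \ref{T: Wang's result}), with the differential subordination bound (\ref{eq: differential subordination}) supplying the only genuine input; everything else is bookkeeping. First I would set $Y_t \assign M_t^{\alpha ,f}$ and $X_t \assign \| \tmmathbf{A}_{\alpha} \|_2 \, M_t^f$, and check that the pair $( X_t , Y_t )$ meets the two requirements for ``$Y$ differentially subordinate by quadratic covariation to $X$'' used in Theorem \ref{T: Wang's result}. The quadratic variation requirement is immediate from (\ref{eq: differential subordination}): since $[X,X]_t = \| \tmmathbf{A}_{\alpha} \|_2^2 \, [M^f,M^f]_t$, that inequality says precisely $\mathd [Y,Y]_t \leqslant \mathd [X,X]_t$, so that $[X,X]_t - [Y,Y]_t$ is nondecreasing, and it is nonnegative because both quadratic variations start at zero.

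Granting in addition the initial condition $|Y_0| \leqslant |X_0|$ (discussed below), Theorem \ref{T: Wang's result} gives $\| Y_t \|_p \leqslant ( p^{\ast} -1 ) \| X_t \|_p$ for every $t$ and every $1<p< \infty$. Substituting $X_t = \| \tmmathbf{A}_{\alpha} \|_2 \, M_t^f$ and pulling the scalar $\| \tmmathbf{A}_{\alpha} \|_2$ outside the $L^p$ norm yields exactly $\| M_t^{\alpha ,f} \|_p \leqslant \| \tmmathbf{A}_{\alpha} \|_2 \, ( p^{\ast} -1 ) \, \| M_t^f \|_p$, which is the assertion. No further estimation is needed, so in this sense the lemma is indeed immediate.

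The only delicate point --- and hence the main obstacle --- is the initial condition $|Y_0| \leqslant |X_0|$. Because the martingale transform and the underlying martingale share the same starting value, one has $|Y_0| = |M_0^f|$ while $|X_0| = \| \tmmathbf{A}_{\alpha} \|_2 \, |M_0^f|$, and taken at face value this would force $\| \tmmathbf{A}_{\alpha} \|_2 \geqslant 1$. The way I would remove this restriction is to use the Gundy--Varopoulos trajectories emanating from $t = - \infty$, exactly as in the proof of Theorem \ref{T: a la Gundy-Varopoulos}: for $f \in L_0^p ( \mathbbm{G} )$ the heat extension $P_t f$ tends to the mean of $f$, namely $0$, so both $M^f$ and $M^{\alpha ,f}$ issue from the common value $0$ and the initial condition holds with equality, $|Y_0| = 0 = |X_0|$, regardless of the size of $\| \tmmathbf{A}_{\alpha} \|_2$. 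Thus the vanishing-mean normalisation $f \in L_0^p$, already present in Theorem \ref{T: p minus 1 estimate}, is precisely what makes the initial condition automatic and renders the invocation of Wang's theorem legitimate.
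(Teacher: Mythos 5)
Your proposal is correct and follows exactly the paper's route: the paper likewise sets $Y_t \assign M_t^{\alpha,f}$ and $X_t \assign \| \tmmathbf{A}_{\alpha} \|_2 \, M_t^f$, cites the subordination inequality (\ref{eq: differential subordination}), and reads the bound off Wang's theorem. Your additional discussion of the initial condition $|Y_0| \leqslant |X_0|$ --- resolved via the trajectories issued from $t=-\infty$ and the vanishing-mean normalisation $f \in L^p_0$, so that both martingales start at $0$ --- is a legitimate detail that the paper's ``we get immediately'' leaves implicit, not a different argument.
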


Finally, the operator $\mathcal{T}^{\alpha}$ being a conditional expectation
of $M_{t}^{\alpha ,f}$, this proves the estimate $\| \mathcal{T}^{\alpha}
\|_{p} \leqslant \| \tmmathbf{A}_{\alpha} \|_{2} \hspace{.1em} ( p^{\ast} -1
)$.

\paragraph{Sharpness} The sharpness in Lie groups with at least two infinite directions is inherited from the continuous case, where it is seen that two continuous directions are enough for sharpness and optimality of the estimate is seen when considering the operator $R^2_1-R^2_2$. If one or both continuous directions are replaced by $\mathbb{Z}$ just consider the isomorphic groups $( t\mathbb{Z} )^{N}$
for $0<t \leqslant 1$ in conjunction with the Lax equivalence theorem
{\cite{LaxRic1956a}}. By the same argument, sharpness for a {\tmem{uniform}}
estimate in $m$ for the cyclic case $( \mathbb{Z}/m\mathbb{Z} )^{N}$ is
inherited from that on the torus $\mathbb{T}^{N}$.

\subsection{Proof of Theorem \ref{T: Choi constant estimate}}The proof of
Theorem \ref{T: Choi constant estimate} follows exactly the same procedure.
Recall Choi's result {\cite{Cho1992a}} for discrete martingales.

\begin{theorem}
  {\dueto{Choi, 1992}}Let $( \Omega , ( \mathcal{F} )_{n \in \mathbbm{N}}
  ,\mathbbm{P} )$ a probability space and $X_{n}$ an adapted real valued
  martingale. Let $( \alpha_{n} )_{n \in \mathbbm{N}}$ be a predictable
  sequence taking values in $[ 0,1 ]$. Let $Y \assign \alpha \ast X$ be the
  martingale transform of $X$ defined for almost all $\omega \in \Omega$ as
  \[ Y_{0} ( \omega ) =X_{0} ( \omega ) , \hspace{1em} \tmop{and} \hspace{1em}
     ( Y_{n+1} -Y_{n} ) ( \omega ) = \alpha_{n} \; ( X_{n+1} -X_{n} ) ( \omega
     ) . \]
  Then there exists a constant $\mathfrak{C}_{p}$ depending only on $p$ such
  that $\| Y \|_{p} \leqslant \mathfrak{C}_{p} \| X \|_{p}$ and the estimate
  is best possible.
\end{theorem}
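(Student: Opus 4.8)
The plan is to prove Choi's inequality by the Burkholder special--function method, adapted to the one--sided multipliers $\alpha_n\in[0,1]$. First I would reduce the $L^p$ bound to the existence of a single function $U:\mathbbm{R}^2\to\mathbbm{R}$ with three properties: a majorization bound $U(x,y)\ge|y|^p-\mathfrak{C}_p^{\,p}\,|x|^p$; a diagonal condition $U(x,x)\le 0$ reflecting the initial constraint $Y_0=X_0$; and the restricted concavity that, for every fixed $\alpha\in[0,1]$ and every point $(x,y)$, the map $h\mapsto U(x+h,\,y+\alpha h)$ is concave on $\mathbbm{R}$. Since $\alpha_n$ is predictable and $X$ has conditionally centered increments, conditional Jensen applied to this concave map along the increment $X_{n+1}-X_n$ shows at once that $U(X_n,Y_n)$ is a supermartingale.

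Granting such a $U$, the theorem follows immediately: combining the supermartingale property with the diagonal condition gives
\[
\mathbbm{E}\,|Y_n|^p-\mathfrak{C}_p^{\,p}\,\mathbbm{E}\,|X_n|^p\ \le\ \mathbbm{E}\,U(X_n,Y_n)\ \le\ \mathbbm{E}\,U(X_0,X_0)\ \le\ 0,
\]
i.e. $\|Y\|_p\le\mathfrak{C}_p\,\|X\|_p$. The whole content of the statement is thus displaced onto the construction of $U$ and the identification of the least admissible constant $\mathfrak{C}_p=\mathfrak{C}_{0,1,p}$, defined as the smallest $\mathfrak{C}$ for which a function with these three properties exists. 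I would produce $U$ by the standard Bellman ansatz: use the $p$--homogeneity $U(\lambda x,\lambda y)=\lambda^p U(x,y)$ to reduce to a function of the single ratio $s=y/x$, recast the restricted concavity as a second--order differential inequality saturated (an ODE) away from the diagonal, and integrate it subject to the majorization and diagonal constraints. The non--explicit value of $\mathfrak{C}_p$, together with its asymptotic expansion in powers of $1/p$, is then fixed by the smoothness matching at the free boundary where the majorant becomes active.

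The main obstacle is exactly this construction. Unlike the two--sided Burkholder problem, whose extremal function and constant $p^*-1$ are explicit, the one--sided cone $\alpha\in[0,1]$ yields a free--boundary ODE with no closed--form solution, and the delicate point is verifying the restricted concavity globally—for arbitrarily large increments—rather than merely its infinitesimal version $U_{xx}+2\alpha U_{xy}+\alpha^2U_{yy}\le 0$. Finally, for sharpness I would construct a sequence of extremal transforms tracing the characteristics of the Bellman function, namely concatenated two--step walks moving along the directions that saturate the concavity inequality, and verify that their $L^p$ ratios converge to $\mathfrak{C}_p$, so that no smaller constant can hold.
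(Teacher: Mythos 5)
The paper does not actually prove this statement: it is imported verbatim as Choi's theorem with a citation to \cite{Cho1992a}, and what the paper subsequently \emph{uses} is not even this discrete statement but its continuous-time extension, Lemma \ref{L: Choi Lp norms of martingales} of Ba\~nuelos--Osekowski, whose quadratic-variation hypothesis replaces the predictable-multiplier formulation. So the comparison is really against Choi's original argument, and there your plan is a faithful reconstruction: Choi's proof is indeed Burkholder's special-function method adapted to the one-sided cone of slopes $[0,1]$, and your reduction is correct as stated --- predictability of $\alpha_{n}$ plus conditionally centered increments, together with concavity of $h \mapsto U ( x+h,y+ \alpha h )$ for each $\alpha \in [ 0,1 ]$, yields the supermartingale property via conditional Jensen (a supporting line at $h=0$ even avoids integrability problems on the upper side, though one should say a word about localization so that $\mathbbm{E} \, U ( X_{n} ,Y_{n} )$ is well defined), and the majorization plus the diagonal condition $U ( x,x ) \leqslant 0$ then close the loop exactly as you write. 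Your identification of the optimal constant with $\mathfrak{C}_{0,1,p}$ in the paper's notation is also right.

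The caveat is that, as a standalone proof, everything of substance is deferred: the existence of $U$ with the \emph{least} admissible constant, the free-boundary analysis, and the extremal sequences for sharpness constitute essentially the whole of Choi's paper, and your sketch asserts rather than executes them. One technical remark on the point you flag as delicate: for a function that is $C^{1}$ globally and $C^{2}$ off the free boundary, concavity along every line of slope $\alpha \in [ 0,1 ]$ is \emph{equivalent} to the infinitesimal inequality $U_{xx} +2 \alpha U_{xy} + \alpha^{2} U_{yy} \leqslant 0$ holding on each smooth piece --- so the genuine difficulty is not ``large increments'' per se but verifying the $C^{1}$ smooth fit across the free boundary, without which the piecewise differential inequality does not globalize. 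Finally, note that if one wanted a proof aligned with how the theorem is actually deployed in this paper, one would need the Ba\~nuelos--Osekowski formulation for c\`adl\`ag martingales under the hypothesis $\mathd [ Y- \frac{a+b}{2} X,Y- \frac{a+b}{2} X ]_{t} \leqslant \mathd [ \frac{b-a}{2} X, \frac{b-a}{2} X ]_{t}$, which your discrete-time argument does not cover; the passage from the discrete transform to this jump-process setting is precisely what the paper's application to $M_{t}^{\alpha ,f}$ requires.
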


The previous result from Choi is only for discrete martingales. For
continuous-in-time martingales, we invoke Theorem 1.6 from the result of
{\tmname{Ba{\~n}uelos}} and {\tmname{Osekowski}} {\cite{BanOse2012a}}, namely

\begin{lemma}
  \label{L: Choi Lp norms of martingales}{\dueto{Banuelos--Osekowski,
  2012}}Let $X_{t}$ and $Y_{t}$ be two real-valued martingales satisfying
  \[ \mathd \left[ Y- \frac{a+b}{2} X,Y- \frac{a+b}{2} X \right]_{t} \leqslant
     \mathd \left[ \frac{b-a}{2} X, \frac{b-a}{2} X \right]_{t} \]
  for all $t \geqslant 0$. Then for all $1<p< \infty$, we have $\| Y \|_{p}
  \leqslant \mathfrak{C}_{p} \| X \|_{p}$.
\end{lemma}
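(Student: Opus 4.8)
The plan is to prove this by the Burkholder method, exhibiting a single special (Bellman) function of two real variables that dominates the $L^p$ comparison along the trajectory of the pair $(X_t,Y_t)$. First I would unpack the hypothesis. Writing $\tilde Y_t := Y_t - \frac{a+b}{2}X_t$, the assumed inequality $\mathd[\tilde Y,\tilde Y]_t \leqslant \mathd[\frac{b-a}{2}X,\frac{b-a}{2}X]_t$ says precisely that $\tilde Y$ is differentially subordinate by quadratic covariation to $\frac{b-a}{2}X$ in the sense used above. At the level of increments this is a cone condition: an admissible pair $(h,k)=(\mathd X,\mathd Y)$ satisfies $\lvert k-\frac{a+b}{2}h\rvert^{2}\leqslant\left(\frac{b-a}{2}\right)^{2}h^{2}$, which rearranges to $(k-ah)(k-bh)\leqslant 0$, the infinitesimal form of $a\,\tmmathbf{I}\leqslant\tmmathbf{A}_{\alpha}\leqslant b\,\tmmathbf{I}$. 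Evaluated at $t=0$ the same inequality pins $Y_0$ to the cone between $aX_0$ and $bX_0$, fixing the correct initial set.

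Next I would reduce the $L^p$ bound to a pointwise inequality for one function $U=U_{a,b,p}:\mathbbm{R}^{2}\to\mathbbm{R}$ with three properties: \emph{(i) majorization}, $U(x,y)\geqslant\lvert y\rvert^{p}-\mathfrak{C}_{a,b,p}^{p}\,\lvert x\rvert^{p}$ for all $(x,y)$; \emph{(ii) initial condition}, $U(x,y)\leqslant 0$ whenever $(y-ax)(y-bx)\leqslant 0$; and \emph{(iii) cone concavity}, $U_{xx}h^{2}+2U_{xy}hk+U_{yy}k^{2}\leqslant 0$ for every direction with $(k-ah)(k-bh)\leqslant 0$, together with the corresponding chord inequality across finite jumps. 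Property (iii) is exactly what makes $t\mapsto U(X_t,Y_t)$ a supermartingale under the hypothesis.

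The construction of $U$ is the technical core. Following \cite{Cho1992a} I would obtain it by solving the associated boundary-value ODE in the Choi variables; the resulting function is not elementary, which is why $\mathfrak{C}_{a,b,p}$ is accessible only through that ODE and, in the case $a=0,b=1$, through the asymptotic expansion recorded after Theorem \ref{T: Choi constant estimate}. The distinguished case $a=-1,b=1$ collapses to Burkholder's function, with $\mathfrak{C}_{-1,1,p}=p^{\ast}-1$. With $U$ fixed, I would apply It{\^o}'s formula to $U(X_t,Y_t)$: the continuous local-martingale part has zero expectation, the drift generated by the quadratic covariations is rendered nonpositive by (iii), and each jump contributes a chord term that (iii) again controls. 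Combining (i)--(ii) and taking expectations,
\[ \mathbbm{E}\lvert Y_t\rvert^{p}-\mathfrak{C}_{a,b,p}^{p}\,\mathbbm{E}\lvert X_t\rvert^{p}\leqslant\mathbbm{E}\,U(X_t,Y_t)\leqslant\mathbbm{E}\,U(X_0,Y_0)\leqslant 0, \]
and the claim follows after taking $p$-th roots and a supremum over $t$.

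The step I expect to be the main obstacle is the jump part of (iii). Since subordination is assumed only through quadratic covariation, it is genuinely weaker than pathwise subordination, so the concavity must survive as a \emph{chord} inequality across arbitrary finite jumps of the c{\`a}dl{\`a}g pair, not merely as a Hessian sign. Checking that the Choi function retains the right sign across jumps, rather than only infinitesimally, is the delicate point, and it is precisely the refinement isolated by \tmname{Wang} \cite{Wan1995a} and by \tmname{Ba{\~n}uelos}--\tmname{Osekowski} \cite{BanOse2012a}.
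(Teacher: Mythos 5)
The paper contains no proof of this lemma to compare against: it is imported verbatim as Theorem 1.6 of \tmname{Ba{\~n}uelos}--\tmname{Osekowski} \cite{BanOse2012a}, and the paper's only job is to verify its hypothesis for $X_{t}=M_{t}^{f}$, $Y_{t}=M_{t}^{\alpha,f}$ (which it does immediately after the statement, via the jump-plus-continuous decomposition of the quadratic variation difference). Your proposal therefore attempts more than the paper does, and the architecture you sketch is indeed that of the cited work: the algebra identifying the hypothesis with the cone condition $(k-ah)(k-bh)\leqslant 0$ is correct, your reading of the $t=0$ case through the convention $[X,X]_{0}=X_{0}^{2}$ is the right one (the lemma as printed would otherwise be missing an initial condition), and the scheme (i)--(iii) plus the supermartingale argument is the standard Burkholder method for c{\`a}dl{\`a}g martingales, with the chord inequality correctly identified as the ingredient that replaces pathwise jump subordination in Wang's sense \cite{Wan1995a}.

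As a self-contained proof, however, there is a genuine gap, located exactly where you flag it but larger than you suggest. First, ``following \cite{Cho1992a}'' does not produce the needed function: Choi's construction is discrete-time, for predictable multipliers in $[0,1]$ (effectively $a=0$, $b=1$ after normalization), and building $U_{a,b,p}$ for general $a\leqslant b$ with majorization at the sharp constant \emph{and} the chord concavity across arbitrary finite jumps is precisely the content of \cite{BanOse2012a}; deferring to Choi defers the entire substance of the lemma. Second, for the continuous part your property (iii) as stated is too weak: directional concavity along cone directions $(h,k)$ controls $U_{xx}\,\mathd[X^{c},X^{c}]+2U_{xy}\,\mathd[X^{c},Y^{c}]+U_{yy}\,\mathd[Y^{c},Y^{c}]$ only after an extreme-point (or matrix-form) argument, since the covariations are constrained by Kunita--Watanabe and the subordination inequality rather than being rank-one; this upgrade must be stated and checked for the Choi-type function, which unlike Burkholder's is not explicit. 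So your text is an accurate and useful proof \emph{plan}, faithful to the strategy of the reference, but the two core verifications it postpones are the theorem, and the paper itself avoids them by citation rather than by proof.
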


The result is now a corollary of Lemma \ref{L: Choi Lp norms of martingales}
above with $X_{t} =M_{t}^{f}$ and $Y_{t} =M_{t}^{\alpha ,f}$. It is not
difficult to prove that the difference of quadratic variations above writes in
terms of a jump part and a continuous part as
 \begin{eqnarray*}
     \lefteqn{ \left[ Y- \frac{a+b}{2} X,Y- \frac{a+b}{2} X \right]_{t} - \mathd
     \left[ \frac{b-a}{2} X, \frac{b-a}{2} X \right]_{t} }\\
     &=& \sum_{i=1}^{m} \sum_{\pm} ( \alpha_{i}^{x} -a ) (
     \alpha_{i}^{x} -b ) \hspace{.1em} ( X^{\pm}_{i} f )^{2} ( \mathcal{B}_{t}
     ) \hspace{.1em} \mathbbm{1} ( \tau_{N_{t}} = \pm 1 ) \hspace{.1em} \mathd
     \mathcal{N}_{t}^{i}\\
     &&+ \left\langle \left( \tmmathbf{A}_{\alpha}^{y} -a
     \tmmathbf{I} \right) \left( \tmmathbf{A}_{\alpha}^{y} -b \tmmathbf{I}
     \right) \hspace{.1em} \tmmathbf{\nabla}_{y} f ( \mathcal{B}_{t} )
     ,  \tmmathbf{\nabla}_{y} f ( \mathcal{B}_{t} )
     \right\rangle \hspace{.1em} \mathd t,
   \end{eqnarray*} 
which is nonpositive since we assumed precisely $a \tmmathbf{I} \leqslant
\tmmathbf{A}_{\alpha} \leqslant b \tmmathbf{I}$. This proves Theorem \ref{T:
Choi constant estimate}.{\hfill}$\Box$

\end{document}